\newtheorem{theorem}{Theorem}[section]
\newtheorem{lemma}{Lemma}[section]
\newtheorem{definition}{Definition}[section]
\numberwithin{equation}{section}
\def\XXint#1#2#3{{\setbox0=\hbox{$#1{#2#3}{\int}$}
\vcenter{\hbox{$#2#3$}}\kern-.51\wd0}}
\newcommand{\R}{\mathbb{R}}
\DeclareMathOperator*{\argmin}{arg\,min}
\providecommand{\keywords}[1]
{
  \noindent
  \textbf{Keywords.} #1
}
\begin{document}

\setlength{\pdfpageheight}{\paperheight}
\setlength{\pdfpagewidth}{\paperwidth}
\title{Supervised Gromov-Wasserstein Optimal Transport}
\author{Zixuan Cang$^{1,*}$, Yaqi Wu$^2$, Yanxiang Zhao$^{2,}$\footnote{Correspondence: zcang@ncsu.edu, yxzhao@email.gwu.edu}
 \vspace{0.1in}\\
$^1$Department of Mathematics, Center for Research in Scientific Computation \\
North Carolina State University, NC, USA \\
$^2$Department of Mathematics \\
The George Washington University, Washington D.C., USA}

\date{}
\maketitle

\begin{abstract}

We introduce the supervised Gromov-Wasserstein (sGW) optimal transport, an extension of Gromov-Wasserstein by incorporating potential infinity patterns in the cost tensor. sGW enables the enforcement of application-induced constraints such as the preservation of pairwise distances by implementing the constraints as an infinity pattern. A numerical solver is proposed for the sGW problem and the effectiveness is demonstrated in various numerical experiments. The high-order constraints in sGW are transferred to constraints on the coupling matrix by solving a minimal vertex cover problem. The transformed problem is solved by the Mirror-C descent iteration coupled with the supervised optimal transport solver. In the numerical experiments, we first validate the proposed framework by applying it to matching synthetic datasets and investigating the impact of the model parameters. Additionally, we successfully apply sGW to real single-cell RNA sequencing data. Through comparisons with other Gromov-Wasserstein variants on real data, we demonstrate that sGW offers the novel utility of controlling distance preservation, leading to the automatic estimation of overlapping portions of datasets, which brings improved stability and flexibility in data-driven applications.

\end{abstract}

\keywords{supervised Gromov-Wasserstein, elementwise constraint, minimal vertex cover, mirror-C descent}


\section{Introduction}

The Gromov-Wasserstein(GW) distance \cite{memoli2011gromov} can be viewed as an integration of the  Wasserstein distance and Gromov-Hausdorff distance based on the optimal transport (OT) formalism for comparing metric measure spaces. Formulated as a nonconvex quadratic optimization problem, it introduces a coupling between two measures that minimizes distance disturbance. Unlike the standard Wasserstein distance defined by OT, the GW distance is based on the idea of comparing the internal structures of the two metric measure spaces, rather than connecting two measures on the same metric space. The alignment of different metric measure spaces such as point clouds embedded in different dimensional Euclidean spaces is ubiquitous in many practical problems in machine learning and computational biology. 
The GW distance and the resulting coupling have found applications to a broad range of applications such as network analysis \cite{xu2019gromov}, graph theory \cite{petric2019got}, natural language processing \cite{alvarez2018gromov}, and computational biology \cite{demetci2022scot}. 
In practice, the problem is often formulated as comparing distance matrices from different metric spaces, where the optimal coupling may be sensitive to small structural perturbations and therefore can be difficult to interpret. Additionally, in many applications, the two objects to be coupled could overlap only partially, such as two partially overlapping datasets of the same system. However, designed as a distance between metric measure spaces, the original OT problem for computing GW distance enforces balanced constraints on the measures and does not consider partial coupling. Computing the original GW distance is also computationally challenging which leads to an NP-hard problem, limiting its applications to real applications with large datasets \cite{zheng2022brief}.

Recently, various efforts have been made to improve computational efficiency and relax the strict constraints in GW-OT to improve its practical applicability. Similar to the seminal work of the efficient Sinkhorn algorithm for OT \cite{cuturi2013sinkhorn}, by adding an entropy regularization, a regularized version of GW distance was introduced with a fast iterative algorithm for the resulting non-convex optimization problem \cite{peyre2016gromov}. Based on a recent method for non-convex optimization \cite{boct2016inertial}, a compact mapping algorithm was also introduced for the regularized GW which guarantees to find a local minimum \cite{solomon2016entropic}. Like unbalanced OT \cite{chizat2018scaling} and partial OT \cite{figalli2010optimal}, similar variants have been developed for GW-OT to relax the balanced constraints \cite{chapel2020partial}. A Frank-Wolfe-based algorithm was proposed to solve the partial GW (PGW) problem \cite{chapel2020partial}, utilizing strategies previously used in unbalanced OT and partial OT \cite{guittet2002extended,figalli2010optimal}. An unbalanced GW (UGW) was also introduced which adds two quadratic $\phi$-divergences to the objective \cite{sejourne2023unbalanced}. In UGW, a lower bound was derived in terms of tensorized entropy. In a special case when $\phi$-divergence is the Kullback-Leibler (KL) divergence, a Sinkhorn-based algorithm was derived by drawing inspiration from strategies used for unbalanced OT \cite{chizat2018scaling}. For general $\phi$, the gap between UGW and the lower bound remains rather unclear and could be sensitive to the choice of $\phi$.

While PGW and UGW feature relaxed constraints, certain applications may require other ways of relaxing the balanced constraints. For instance, prior knowledge may be unavailable to assign the total transported mass required by PGW, and the two marginal masses may differ significantly which makes it challenging to apply UGW. Having identified similar limitations in partial OT and unbalanced OT, we have previously developed a supervised OT \cite{cang2022supervised} which enforces elementwise constraints on the transport plan to exclude unreasonable coupling and has been successfully applied to a crucial practical problem in biology, the inference of cell-cell communication from spatial data \cite{cang2023screening}. Here, we aim to develop an analogous extension for GW-OT which excludes unreasonable couplings of pairs of points.

We introduce supervised Gromov-Wasserstein (sGW), a method through which we can {\it supervise} the extent to which the metric is preserved after it is mapped to the other metric space using the optimal transport plan. To this end, we set a geometric threshold on the difference between metrics and replace elements in the cost tensor by infinity when they exceed the threshold. This results in an sGW formulation as a non-convex min-max optimization problem where couplings between certain pairs of points with significantly different intra-pair distances are avoided. To make the problem computationally tractable, we convert the elementwise constraint on the 4D tensor into a constraint on the transport plan by formulating a minimal vertex cover problem. We then solve the sGW problem by progressively computing the stationary point of the entropy regularized objective function and performing each update using the generalized Sinkhorn algorithm from our previous work on supervised OT \cite{cang2022supervised}. sGW has several unique properties, including enforcing elementwise constraints and automatically determining the total transported mass. Therefore, sGW extends the capabilities of GW, PGW, and UGW by offering a flexible method for controlling the degree to which the geometry is preserved through mapping. This capability could be useful for applications that have geometric restrictions such as single-cell genomics, traffic flow and signal processing. To further improve the efficiency for practical applications, we propose a strategy to compute sGW on subsampled data and recover the full coupling by solving another supervised OT problem.

The paper is structured as follows: In Sections \ref{sec:background}, we provide a brief review of GW and the requisite OT details. Section \ref{sec:main} is dedicated to defining the proposed sGW and the development of a numerical algorithm for sGW. This algorithm incorporates a greedy minimum vertex cover solver, mirror-C descent algorithm and a generalized Sinkhorn iteration based on the supervised optimal transport \cite{cang2022supervised}. We demonstrate the utilities and properties of sGW on different data types and applications, and introduce a strategy for handling large datasets in Section \ref{sec:numerical}. Lastly, we summarize the key findings and insights drawn from sGW, along with comments on future research directions in Section \ref{sec:conclusion}.


\section{Background on Optimal Transport and Gromov-Wassertein Distance}\label{sec:background}

\subsection{Optimal Transport Theory}\label{subsection:OT}

Let $\mathcal{X}=\left\{\mathbf{x}_i\right\}_{i=1}^n$ and $\mathcal{Y}=\left\{\mathbf{y}_j\right\}_{j=1}^m$ be two point clouds that represent the source and target, respectively. Let $\Sigma_{n}=\left\{\mathbf{p}=\left(p_i\right)_i \in \mathbb{R}_{+}^n:  \allowbreak \sum_i p_i=1\right\}$ be the probability simplex, and consider a pair of probability distributions (or histograms) $(\mathbf{a}, \mathbf{b}) \in \Sigma_n \times \Sigma_m$ over $\mathcal{X}$ and $\mathcal{Y}$:
\begin{align}\label{eqn:point_clouds}
\mathbf{a}=\sum_{i=1}^n a_i \delta_{\mathbf{x}_i}, \quad \mathbf{b}=\sum_{j=1}^m b_j \delta_{\mathbf{y}_j}.
\end{align}
The standard discrete Kantorovich optimal transport problem is defined as
\begin{align}\label{eqn:OT}
L_{\mathrm{OT}}(\mathbf{a}, \mathbf{b} ; \mathbf{C})=\min _{\mathbf{P} \in \mathbf{U}(\mathbf{a}, \mathbf{b})}\langle\mathbf{P}, \mathbf{C}\rangle_F=\sum_{i, j} P_{i j} C_{i j}.
\end{align}
Here $\mathbf{P} = (P_{ij})_{ij}$ is a transport plan and $\mathbf{C} = (C_{ij})_{ij}$ is the cost matrix. The admissible set is
$$
\mathbf{U}(\mathbf{a}, \mathbf{b}) \stackrel{\text{def.}}{=}\left\{\mathbf{P} \in \mathbb{R}_{+}^{n \times m}: \mathbf{P} \textbf{1}_m=\mathbf{a},\ \mathbf{P}^{\mathrm{T}} \textbf{1}_n=\mathbf{b}\right\}
$$
where $\textbf{1}_n$ ($\textbf{1}_{n\times m}$, respectively) is the $n$-dimensional ($n\times m$-dimensional, respectively) all-one vector (matrix, respectively), and we use the following matrix-vector notation:
$$
\mathbf{P} \textbf{1}_m=\left(\sum_j P_{ij}\right)_i \in \mathbb{R}^n, \quad  \mathbf{P}^{\mathrm{T}} \textbf{1}_n=\left(\sum_i P_{ij}\right)_j \in \mathbb{R}^m.
$$
From now on, we will omit the subscript in $\mathbf{1}_m$ or $\mathbf{1}_{n\times m}$ as its dimension can be inferred from the context. The entry $C_{ij}$ in the matrix $\mathbf{C}$ characterizes the pairwise finite cost of moving one unit of mass from $\mathbf{x}_i$ to $\mathbf{y}_j$. This is a linear programming problem whose optimal solution is not necessarily unique.

When $\|\mathbf{a}\|_1 \neq \|\mathbf{b}\|_1$ and assuming $s$ is a fraction of total mass: $0 \leq s \leq \min \left(\|\mathbf{a}\|_1,\|\mathbf{b}\|_1\right)$, partial optimal transport (POT) \cite{Benamou_SISC2015} was introduced:
\begin{align}
L^{s}_{\mathrm{POT}}(\mathbf{a}, \mathbf{b} ; \mathbf{C})=\min _{\mathbf{P} \in \mathbf{U}^{s}(\le\mathbf{a}, \le\mathbf{b})}\langle\mathbf{P}, \mathbf{C}\rangle_F,
\end{align}
where the admissible set of couplings is defined as 
\begin{align}\label{eqn:pOT_feasible_set}
\mathbf{U}^s(\leq \mathbf{a}, \leq \mathbf{b})=\left\{\mathbf{P} \in \mathbb{R}_{+}^{n \times m} \mid \mathbf{P}  \mathbf{1} \leq \mathbf{a}, \mathbf{P}^{\mathrm{T}}  \mathbf{1} \leq \mathbf{b},\quad \langle\mathbf{P}, \textbf{1} \rangle_F= s \right\}.
\end{align}

Another significant variant of OT is the unbalanced optimal transport (UOT) \cite{chizat2018scaling}:
\begin{align}\label{eqn:UOT}
L_{\mathrm{UOT}}(\mathbf{a}, \mathbf{b} ; \mathbf{C})=\min _{\mathbf{P}\ge0}\langle\mathbf{P}, \mathbf{C}\rangle_F + F(\mathbf{P}\mathbf{1}) + G(\mathbf{P}^{\mathrm{T}}\mathbf{1}),
\end{align}
where $F$ and $G$ are lower-semicontinuous convex functions. A typical choice for $F$ and $G$ is the Kullback-Leibler divergence: $F(\cdot) = \text{KL}(\cdot | \textbf{a} ), G(\cdot) = \text{KL}(\cdot|\mathbf{b})$, where the $\text{KL}$ function is defined as
\[
\text{KL}(\mathbf{p}|\mathbf{q}) = \sum_{i} p_i \log\frac{p_i}{q_i} - p_i + q_i.
\]
It is worth noting that when $F = \iota_{\{\mathbf{a}\}}$ and $G = \iota_{\{\mathbf{b}\}}$, the UOT formulation (\ref{eqn:UOT}) reduces to the original OT (\ref{eqn:OT}).

\subsection{Supervised Optimal Transport}\label{subsection:sOT}

In the conventional OT formulation, the requirement for two discrete probability distributions to have equal total mass can be overly restrictive for many practical applications. While POT and UOT offer different relaxations of mass conservation, they are limited in incorporating elementwise application-induced constraints, especially when there are some infinity entries in the cost matrix $\mathbf{C}$. To address this challenge, the supervised optimal transport (sOT) framework has been developed \cite{cang2022supervised}, drawing inspiration from the theories of POT and UOT. The sOT is particularly designed to accommodate situations where $\|\mathbf{a}\|_1 \ne \|\mathbf{b}\|_{1}$, and certain costs $C_{ij}$ may be infinite. Essentially, sOT introduces a linear programming problem with supplementary sparsity constraints imposed.

Given a cost matrix $\mathbf{C}$, which includes infinity elements, we define $\Omega_{\mathbf{C}}$ as the set of indices corresponding to these infinity entries. We then denote $M_{\mathbf{C}}^+$ as the set of $n \times m$ matrices that are structured such that any element at the position of $(i,j)\in\Omega_{\mathbf{C}}$ is set to 0, while the remaining elements are nonnegative. Within this framework, the sOT problem can be formally stated as follows \cite{cang2022supervised}:
\begin{definition}
The supervised optimal transport problem is defined as the following bilevel extremization problem:
\begin{align}\label{eqn:sOT01}
    L_{\emph{sOT}}(\mathbf{a},\mathbf{b};\mathbf{C}) = \max_{s \in [0, \min \{ \|\mathbf{a}\|_{1}, \|\mathbf{b}\|_{1} \} ]} \min_{\mathbf{P} \in \mathbf{U}(\leq \mathbf{a}, \leq \mathbf{b}) \cap M_{\mathbf{C}}^+} \Big\{ \langle\mathbf{P}, \mathbf{C}\rangle_F: \langle\mathbf{P}, \mathbf{1}\rangle_F= s \Big\},
\end{align}
where the feasible set is defined as 
\[
\mathbf{U}(\le\mathbf{a}, \le\mathbf{b}) \stackrel{\emph { def. }}{=}\left\{\mathbf{P} \in \mathbb{R}_{+}^{n \times m}: \mathbf{P} \mathbf{1} \le \mathbf{a}, \ \mathbf{P}^{\mathrm{T}} \mathbf{1} \le \mathbf{b}\right\}.
\]
\end{definition}

By its definition, the sOT solution transports the maximal possible marginal density at the lowest total cost, subject to a predefined sparsity in the transport plan $\mathbf{P}\in M_{\mathbf{C}}^+$. sOT is closely related to POT, that is in the special case where the cost matrix $\mathbf{C}$ does not contain $\infty$ entries, sOT reduces to the POT problem with $s=\min\{\|\mathbf{a}\|_1,\|\mathbf{b}\|_1\}$.

An alternative definition for sOT is given by considering the blocked marginal distributions that remain untransported. Specifically, if we define the blocked marginal distributions $\boldsymbol{\upmu} \geq 0$ and $\boldsymbol{\upnu}\geq 0$ as follows:
\begin{align*}
    \mathbf{P} \textbf{1} + \boldsymbol{\upmu} = \mathbf{a}, \quad    \mathbf{P} ^{\mathrm{T}} \textbf{1} + \boldsymbol{\upnu} = \mathbf{b},
\end{align*}
we can readily observe that
\begin{align*}
   s = \langle\mathbf{P}, \textbf{1}\rangle_F=\| \mathbf{a} \|_1 - \| \boldsymbol{\upmu} \|_1 = \| \mathbf{b} \|_1 - \| \boldsymbol{\upnu} \|_1 =\frac{1}{2} \bigl[ (\| \mathbf{a} \|_1 + \| \mathbf{b} \|_1)-(\| \boldsymbol{\upmu} \|_1 + \| \boldsymbol{\upnu} \|_1) \bigr].
\end{align*}
In other terms, the total transported mass $s$ and the total blocked mass $\tau = \| \boldsymbol{\upmu} \|_1 + \| \boldsymbol{\upnu} \|_1 $ of the marginal distributions are inversely related. Consequently, maximizing $s$ is equivalent to minimizing $\tau$. Therefore, the alternative definition for sOT can be stated as:
\begin{align}\label{eqn:sOT02}
    L_{\text{sOT}}(\mathbf{a},\mathbf{b};\mathbf{C}) = \min_{\substack{\tau :=\|\boldsymbol{\upmu}\|_1+\|\boldsymbol{\upnu}\|_1  \\ (\boldsymbol{\upmu}, \boldsymbol{\upnu}) \in \mathcal{A}_\mathbf{C}}} \min_{\mathbf{P} \in \mathbf{U}( \mathbf{a}-\boldsymbol{\upmu}, \mathbf{b}-\boldsymbol{\upnu}) \cap M_\mathbf{C}(\mathbb{R}_+)}  \langle\mathbf{P}, \mathbf{C}\rangle_F,
\end{align}
in which the admissible set $\mathcal{A}_{\mathbf{C}}$ for the marginal blocked distributions $(\boldsymbol{\upmu}, \boldsymbol{\upnu})$ is defined as:
$$
\mathcal{A}_{\mathbf{C}}=\{(\boldsymbol{\upmu}, \boldsymbol{\upnu}) \in[0, \mathbf{a}] \times[0, \mathbf{b}] : \exists \mathbf{P} \in \mathbf{U}(\mathbf{a}-\boldsymbol{\upmu}, \mathbf{b}-\boldsymbol{\upnu}) \text { such that }\langle\mathbf{P}, \mathbf{C}\rangle<\infty\}.
$$
Here the inclusion $(\boldsymbol\upmu, \boldsymbol\upnu) \in[0, \mathbf{a}] \times[0, \mathbf{b}]$ is entrywise, namely, $\mu_i \in\left[0, a_i\right], i=$ $1,2, \ldots, n$, and $\nu_j \in\left[0, b_j\right], j=1,2, \ldots, m$. It is important to note that this bilevel optimization problem presents significant computational challenges. It was demonstrated that this bilevel minimization (\ref{eqn:sOT02}) can be transformed into an equivalent $l^1$-penalized optimal transport problem (\ref{eqn:sOT03}) as follows \cite{cang2022supervised}, which allows for efficient computation using the Dykstra algorithm when entropic regularization is applied:
\begin{definition}
The supervised optimal transport problem in bilevel form (\ref{eqn:sOT01}) or (\ref{eqn:sOT02}) is equivalent to the following single level optimization 
\begin{align}
    L_{\emph{sOT}}(\mathbf{a},\mathbf{b};\mathbf{C}) &= \min_{\substack{ \left(\boldsymbol{\upmu}, \boldsymbol{\upnu} \right) \in \mathcal{A}_{\mathbf{C}} \\ \mathbf{P} \in \mathbf{U}(\mathbf{a}-\boldsymbol{\upmu}, \mathbf{b}-\boldsymbol{\upnu}) \cap M_{\mathbf{C}}^+}} \langle\mathbf{P}, \mathbf{C}\rangle_F+ \gamma \Big(\| \boldsymbol{\upmu} \|_1 + \| \boldsymbol{\upnu} \|_1\Big) \quad \emph{for} \ \gamma\gg1 \nonumber\\
    &= \min_{\mathbf{P} \in \mathbf{U}(\leq \mathbf{a}, \leq \mathbf{b}) \cap M_{\mathbf{C}}^+} \langle\mathbf{P}, \mathbf{C}\rangle_F+ \gamma \Big(\| \mathbf{a}-\mathbf{P}\mathbf{1}  \|_1 + \| \mathbf{b}-\mathbf{P}^{\mathrm{T}} \mathbf{1} \|_1\Big) \quad \emph{for} \ \gamma\gg1. \label{eqn:sOT03}
\end{align}
\end{definition}

By incorporating entropic regularization into the formulation (\ref{eqn:sOT03}), the following expression is obtained:
\begin{align}
    L^{\epsilon}_{\text{sOT}}(\mathbf{a},\mathbf{b};\mathbf{C}) 
    & = \min_{\mathbf{P} \in \mathbf{U}(\leq \mathbf{a}, \leq \mathbf{b})  \cap M_{\mathbf{C}}^+} \langle\mathbf{P}, \mathbf{C}\rangle_F - \epsilon H(\mathbf{P}) + \gamma \Big(\| \mathbf{a}-\mathbf{P}\mathbf{1}  \|_1 + \| \mathbf{b}-\mathbf{P}^{\mathrm{T}} \mathbf{1} \|_1\Big) \\
    &= \min_{\mathbf{P} \in \mathbf{U}(\leq \mathbf{a}, \leq \mathbf{b})  \cap M_{\mathbf{C}}^+}  \epsilon \text{KL}(\mathbf{P}|\mathbf{K}) + \gamma \Big(\| \mathbf{a}-\mathbf{P}\mathbf{1}  \|_1 + \| \mathbf{b}-\mathbf{P}^{\mathrm{T}} \mathbf{1} \|_1\Big). \label{eqn:sOT_04}
\end{align}
Here, $H(\mathbf{P}) := -\sum_{ij} P_{ij}(\log P_{ij} -1 ) $ is the entropy function and $\mathbf{K} := \exp(-\mathbf{C}/\epsilon)$ is the kernel function. This formulation allows for the efficient application of the sOT solver first introduced in \cite{cang2022supervised}.

It is worth pointing out that for a given $\infty$-pattern in $\mathbf{C}$, the most possible transported mass by sOT is independent of the chosen cost form provided that $\gamma$ is sufficiently large. In other words, we can employ an $l^1$-cost, $l^2$-cost, or an all-one cost in $\mathbf{C}$, but assign the same $\infty$-pattern to $\mathbf{C}$. Then by choosing a sufficiently large $\gamma$, the resulting optimal plans $\mathbf{P}^*_{l^1}, \mathbf{P}^*_{l^2}, \mathbf{P}^*_{\text{all-one}}$ will transport an identical amount of mass. This property is later used in (\ref{eqn:sOT_computemass}) for the calculation of the maximum transported mass associated with a given 0-pattern in $\mathbf{P}$.

In this paper, we will introduce a Mirror-descent type iterative method to solve the supervised Gromov-Wasserstein problem. In each iteration, the problem can be transformed into the form that fits into (\ref{eqn:sOT_04}), utilizing the efficient sOT solver. More details can be found in Section \ref{subsection:Algorithm}.

\subsection{Gromov-Wasserstein Distance and Its Variants}

Conventional OT formulation and its variants focus on distributions $\mathcal{X}=\left\{\mathbf{x}_i\right\}_{i=1}^n$ and $\mathcal{Y}=\left\{\mathbf{y}_j\right\}_{j=1}^m$ in the same metric space with well-defined inter-distribution costs. On the other hand, we also often encounter distributions defined on different underlying spaces where such a cost is unavailable. To address this challenge, one solution proposed in \cite{memoli2007use,memoli2011gromov} extends the concept of OT to align two metric spaces by replacing the cost matrix with a quadratic function. In cases where the two discrete distributions originate from distinct underlying spaces and are unbalanced, a partial GW problem was introduced \cite{chapel2020partial} that considers the same admissible set as in the POT formulation. Remarkably, the GW problem can be reformulated as an order-4 tensor optimization problem. Building upon this approach, a generalized tensorized formulation was developed for unbalanced GW by incorporating two divergence terms into the objective function \cite{sejourne2023unbalanced}. 

In this section, we provide a brief overview of GW, partial GW (PGW) \cite{chapel2020partial}, and unbalanced GW (UGW) \cite{sejourne2021unbalanced} in the discrete setting and their equivalent formulations. To begin with, we will employ the same notations as used in Section \ref{subsection:OT}. Let $\mathcal{X}=\left\{\mathbf{x}_i\right\}_{i=1}^n$ and $\mathcal{Y}=\left\{\mathbf{y}_j\right\}_{j=1}^m$ be two point clouds that represent the source and target points, respectively, except that they are now taken from distinct metric spaces described by distance matrices $\mathbf{D}^1\in\R^{n\times n}$ and $\mathbf{D}^2\in\R^{m\times m}$. We continue to work with a pair of probability distributions $(\mathbf{a}, \mathbf{b}) \in \Sigma_n \times \Sigma_m$ over $\mathcal{X}$ and $\mathcal{Y}$ as in (\ref{eqn:point_clouds}). 
Then, the GW problem reads
\begin{align}\label{eqn:GW}
\text{GW}^{2}_{2}\left((\mathbf{a}, \mathbf{D}^{1}),\left(\mathbf{b}, \mathbf{D}^{2}\right)\right) \stackrel{\text { def. }}{=} \min _{\mathbf{P} \in \mathbf{U}(\mathbf{a}, \mathbf{b})} \sum_{i, j, k, l} \frac{1}{2}\left|{D}^{1}_{i k}-{D}_{j l}^{2}\right|^{2} {P}_{i j} {P}_{k l}.
\end{align}
Denoting a tensor-matrix multiplication as
\[
(\mathcal{M} \circ \mathbf{P})_{i j}=\sum_{k=1}^n \sum_{l=1}^m \mathcal{M}_{i j k l} {P}_{k l}, \quad \text{for\ } \mathcal{M}=\left(\mathcal{M}_{i j k l}\right) \in \mathbb{R}^{n \times m \times n \times m},\ \mathbf{P}\in\mathbb{R}^{n\times m},
\]
the objective function in GW (\ref{eqn:GW}) can be rewritten in the tensor form  
$$
    \sum_{i, j, k, l}\left|{D}^{1}_{i k}-{D}_{j l}^{2}\right|^2 {P}_{i j} {P}_{k l} = \left\langle\mathcal{M}\left( \mathbf{D}^{1},  \mathbf{D}^{2}\right) \circ \mathbf{P}, \mathbf{P}\right\rangle_F = \langle \mathcal{M}\left( \mathbf{D}^{1}, \mathbf{D}^2 \right), \mathbf{P} \otimes \mathbf{P} \rangle_F,
$$
where the cost tensor is given as
\begin{align}\label{eqn:4thorder_tensor}
\mathcal{M}\left( \mathbf{D}^{1}, \mathbf{D}^2 \right) _{ijkl }= \left |{D}_{i k}^1-{D}^{2}_{j l}\right |^2,
\end{align}
and the coupling tensor $\mathbf{P} \otimes \mathbf{P}$: $(\mathbf{P}\otimes\mathbf{P})_{ijkl} = P_{ij}P_{kl}$ is the outer product of $\mathbf{P}$ and itself. By expanding the quadratic term above, one can have an alternative GW formulation \cite{peyre2016gromov}:
\begin{align}
\text{GW}^{2}_{2}\left((\mathbf{a}, \mathbf{D}^{1}),\left(\mathbf{b}, \mathbf{D}^{2}\right)\right) 
& = \min _{\mathbf{P} \in \mathbf{U}(\mathbf{a}, \mathbf{b})} \frac{1}{2} \|\boldsymbol{\mathbf{P} \textbf{1}}\|^{2}_{\mathbf{D}^{1}} + \frac{1}{2}\|\boldsymbol{\mathbf{P}^{\mathrm{T}} \textbf{1}}\|^{2}_{\mathbf{D}^{2}} -  \left\langle \mathbf{D}^{1} \mathbf{P} \mathbf{D}^{2,\mathrm{T}}, \mathbf{P} \right\rangle_F \nonumber\\
& = \frac{1}{2} \|\mathbf{a}\|^{2}_{\mathbf{D}^{1}\odot\mathbf{D}^1} + \frac{1}{2} \|\mathbf{b}\|^{2}_{\mathbf{D}^{2}\odot\mathbf{D}^2} -  \max _{\mathbf{P} \in \mathbf{U}(\mathbf{a}, \mathbf{b})}  \left\langle \mathbf{D}^{1} \mathbf{P} \mathbf{D}^{2,\mathrm{T}}, \mathbf{P} \right\rangle_F, \label{eqn:GW02}
\end{align}
where $\|\mathbf{a}\|^2_{\mathbf{D}^1\odot\mathbf{D}^1}: = \mathbf{a}^{\mathrm{T}}(\mathbf{D}^1\odot\mathbf{D}^1)\mathbf{a}$, and similarly for $\|\mathbf{b}\|^2_{\mathbf{D}^2\odot\mathbf{D}^2}$.

When $\|\mathbf{a}\|_1 \neq \|\mathbf{b}\|_1$, a partial GW problem (PGW) can be formulated similar to POT, which reads \cite{chapel2020partial}
\begin{align}
\text{PGW}^{2}_{2}\left((\mathbf{a}, \mathbf{D}^{1}),\left(\mathbf{b}, \mathbf{D}^{2}\right);s\right) \stackrel{\text { def. }}{=}& \min _{\mathbf{P} \in \mathbf{U}^{s}(\leq \mathbf{a}, \leq \mathbf{b})} \frac{1}{2}\sum_{i, j, k, l}\left|{D}^{1}_{i k}-{D}_{j l}^{2}\right|^2 {P}_{i j} {P}_{k l} \nonumber \\
=& \min _{\mathbf{P} \in \mathbf{U}^{s}(\leq \mathbf{a}, \leq \mathbf{b})} \frac{1}{2} \langle \mathcal{M}\left( \mathbf{D}^{1}, \mathbf{D}^2 \right), \mathbf{P} \otimes \mathbf{P} \rangle_F. \label{eqn:PGW}
\end{align} 

\noindent It is evident that GW (\ref{eqn:GW}) and the PGW (\ref{eqn:PGW}) are order-4 tensor optimization problem. However, a notable distinction between them arises. Since PGW has no equality constraints $\mathbf{P} \mathbf{1} = \mathbf{a}, \mathbf{P}^{\mathrm{T}} \mathbf{1} = \mathbf{b}$, consequently there is no alternative maximization form as in the GW form (\ref{eqn:GW02}). Besides, separately rescaling the distance matrices may lead to different minimizers for PGW.

Similar to unbalanced OT, unbalanced GW (UGW) has been proposed as follows \cite{sejourne2021unbalanced}:
\begin{align}
\mathrm{UGW}((\mathbf{a}, \mathbf{D}^{1}),\left(\mathbf{b}, \mathbf{D}^{2}\right))=\min _{\mathbf{P} \in \mathbf{U}(\leq \mathbf{a}, \leq \mathbf{b})} & \frac{1}{2} \langle \mathcal{M}\left( \mathbf{D}^{1}, \mathbf{D}^2 \right), \mathbf{P} \otimes \mathbf{P} \rangle_F \nonumber \\
&+ \mathrm{D}_{\varphi}(\mathbf{P} \textbf{1} \otimes \mathbf{P} \textbf{1} \mid \mathbf{a} \otimes \mathbf{a}) + \mathrm{D}_{\varphi}(\mathbf{P}^{\mathrm{T}} \textbf{1} \otimes \mathbf{P}^{\mathrm{T}} \textbf{1} \mid \mathbf{b} \otimes \mathbf{b}), \label{eqn:UGW}
\end{align}
where $D_{\varphi}(\cdot)$ is the quadratized $\varphi$-divergence, and particular choices of $\varphi$ are KL divergence:  $\varphi(t) = t \log (t) - t + 1 $ and total variation:  $\varphi(t) = |t - 1|$.



\section{Supervised Gromov-Wasserstein Optimal Transport}\label{sec:main}

PGW and UGW effectively tackle the issue of unequal mass from the two marginal distributions from a global perspective, that is setting a total transported mass in PGW and using two global divergence penalties in UGW. It is also common in practice that the preservation of geometry needs to be controlled which leads to a collection of local constraints. For example, consider the comparison of two pairs, ${\mathbf{x}_i, \mathbf{x}_j}$ and ${\mathbf{y}_k, \mathbf{y}_l}$ originating from two distinct spaces. If the distance between $\|\mathbf{x}_i - \mathbf{x}_j\|$ and $\|\mathbf{y}_k - \mathbf{y}_l\|$ exceeds a certain threshold, we may need to disregard their alignment. It is motivated by such scenarios that we introduce the supervised Gromov-Wasserstein (sGW) optimal transport problem.

\subsection{sGW Formulation}

A key feature in the sGW optimal transport formulation is the presence of $\infty$ elements in the cost $\mathcal{M}\left( \mathbf{D}^{1}, \mathbf{D}^2 \right)$. Inspired by the sOT formulation \cite{cang2022supervised}, we aim to find the best possible matching between datasets from two different metric spaces, involving as many points as possible (transporting as much mass as possible from the marginal distributions) while preserving the geometric similarity between them as much as possible. Motivated by this idea, we formulate the sGW as follows:
\begin{definition}
The supervised Gromov-Wasserstein optimal transport problem is defined as the following bilevel optimization:
\begin{align}\label{eqn:sGW01}
\emph{sGW}((\mathbf{a}, \mathbf{D}^{1}),\left(\mathbf{b}, \mathbf{D}^{2}\right)) = \max_{s \in [0, \min \{ \|\mathbf{a}\|_{1}, \|\mathbf{b}\|_{1} \} ]} \min_{\mathbf{P} \in \mathbf{U}(\leq \mathbf{a}, \leq \mathbf{b}) } \Big\{ \frac{1}{2} \langle \mathcal{M}, \mathbf{P} \otimes \mathbf{P} \rangle_F: \langle \mathbf{P} \otimes \mathbf{P} , \mathbf{1} \rangle_F= s^2 \Big\}.
\end{align}
Here the tensor $\mathcal{M} = \mathcal{M}(\mathbf{D}^1, \mathbf{D}^2)$ is defined in (\ref{eqn:4thorder_tensor}), and contains some $\infty$ elements.
\end{definition}

The $\infty$-pattern of the tensor $\mathcal{M}$ can be general and here we focus on a natural choice in applications:
\begin{align}\label{eqn:Cost_cut}
\mathcal{M}\left( \mathbf{D}^{1}, \mathbf{D}^2 \right) _{ijkl }= 
\begin{cases}
 |{D}_{i k}^1-{D}^{2}_{j l} |^2, \quad \text{if} \  |{D}_{i k}^1-{D}^{2}_{j l}| \le \rho, \\
\infty, \hspace{0.75in} \text{if} \ |{D}_{i k}^1-{D}^{2}_{j l}| > \rho.
\end{cases}
\end{align}
where $\rho \in ( 0 , \max _{ijkl} \sqrt{\mathcal{M}_{ijkl}} )$ is a fixed threshold. From now on, we will focus on the uniform marginal distributions:
\begin{align}\label{eqn:uniform_marginal}
\mathbf{a}=\sum_{i=1}^n \frac{1}{n} \delta_{\boldsymbol{x}_i}, \quad \mathbf{b}=\sum_{j=1}^m \frac{1}{m} \delta_{\boldsymbol{y}_j},
\end{align}
as they are of utmost importance in GW applications for matching objects.

\subsection{Order Reduction of Feasible Set for sGW using Minimal Vertex Cover}

The entire admissible set directly induced by the constraints on the cost tensor (\ref{eqn:Cost_cut}) is impractical to consider in real applications due to the high order. To mitigate this limitation, we introduce a connection between sGW and minimal vertex cover (MVC), where MVC will be used to narrow down the original admissible set induced by an order-4 constraint toward a much smaller set defined as a collection of order-2 constraints on the transport plan.

Given a fixed $\rho \in ( 0 , \max _{ijkl} \sqrt{\mathcal{M}_{ijkl}} )$, we define the set $\mathcal{S}_\mathcal{M}$ as follows:
\begin{align}
\mathcal{S}_\mathcal{M} \stackrel{\emph { def. }}{=}\left\{\mathbf{P} \in \mathbb{R}_{+}^{n \times m}: {P}_{ij} \cdot {P}_{kl} = 0 \quad \text{if} \  \mathcal{M}_{ijkl} = \infty, \quad \forall 1 \leq i,k \leq n, \ 1 \leq j,l \leq m \right\}.
\end{align}
It is evident that sGW (\ref{eqn:sGW01}) can be reformulated as
\begin{align}\label{eqn:sGW02}
\text{sGW} = \max_{s \in [0, \min \{ \|\mathbf{a}\|_{1}, \|\mathbf{b}\|_{1} \} ]} \min_{\mathbf{P} \in \mathbf{U}(\leq \mathbf{a}, \leq \mathbf{b}) \cap \mathcal{S_M} } \Big\{ \frac{1}{2} \langle \mathcal{M}, \mathbf{P} \otimes \mathbf{P} \rangle_F: \langle \mathbf{P} \otimes \mathbf{P} , \mathbf{1} \rangle_F= s^2 \Big\}.
\end{align}
In other words, the $\infty$-pattern in $\mathcal{M}$ is equivalently replaced by a $0$-pattern in $\mathbf{P}$ through the set $\mathcal{S}_{\mathcal{M}}$. The $0$-pattern of $\mathbf{P}$ in $\mathcal{S}_{\mathcal{M}}$ is defined via a product ${P}_{ij} \cdot {P}_{kl} = 0$, which can be challenging to implement in practice. We therefore look for an alternative set from which one can simply indicate which $P_{ij}$ is zero. To this end, we define a graph and consider the Minimal Vertex Covers (MVCs) on it.

Let $V = \{v_{ij}\}_{1\le i \le n, 1\le j \le m}$ be a set of $n\times m$ vertices. We define an edge set $E$ by the following rule: there is an edge connecting $v_{ij}$ and $v_{kl}$ if and only if ${P}_{ij} \cdot {P}_{kl} = 0$. We then define a graph by the vertice set $V$ and edge set $E$:
\begin{align}\label{eqn:graphG}
    G = G(V,E).
\end{align}
Suppose $V_{\text{min}}^{(l)}, l = 1, \cdots, N_{\text{mvc}}$, represents the list of all possible MVC of $G(V, E)$ (see Section \ref{subsubsection:GreddyMVC} for the definition of MVC), and $\mathcal{T}_\mathcal{M}^{(l)}$ represents 
\begin{align}\label{eqn:T_M}
\mathcal{T}_\mathcal{M}^{(l)} \stackrel{\emph { def. }}{=}\left\{\mathbf{P} \in \mathbb{R}_{+}^{n \times m}: P_{ij}= 0 \quad \text{if} \ v_{ij} \in V_{\text{min}}^{(l)}, \quad \forall 1 \leq i \leq n, \ 1 \leq j \leq m \right\}.
\end{align}
By the definition of MVC, we have that $\mathcal{T}_{\mathcal{M}}^{(l)} \subset \mathcal{S}_{\mathcal{M}}$ for each $l$, and $\cup_l\mathcal{T}_{\mathcal{M}}^{(l)} = \mathcal{S}_{\mathcal{M}}$. Therefore instead of solving sGW over $\mathcal{S}_{\mathcal{M}}$, we can recast (\ref{eqn:sGW02}) into 
\begin{align}\label{eqn:sGW03}
\text{sGW} = \max_{s \in [0, \min \{ \|\mathbf{a}\|_{1}, \|\mathbf{b}\|_{1} \} ]} \min_{\mathbf{P} \in \mathbf{U}(\leq \mathbf{a}, \leq \mathbf{b}) \cap \left(\cup_l \mathcal{T}_\mathcal{M}^{(l)}\right) } \Big\{ \frac{1}{2} \langle \mathcal{M}, \mathbf{P} \otimes \mathbf{P} \rangle_F: \langle \mathbf{P} \otimes \mathbf{P} , \mathbf{1} \rangle_F= s^2 \Big\}.
\end{align}
Now the formulation (\ref{eqn:sGW03}) is to minimize the same objective function over the union of smaller sets $\mathcal{T}_{\mathcal{M}}^{(l)}$, which directly characterizes the $0$-pattern for each possible $P_{ij}$ (order-2 constraints), instead of featuring it by $P_{ij}\cdot P_{kl}$ (a order-4 constraint).

Given a MVC $V_{\text{min}}^{(l)}$ and the corresponding $\mathcal{T}_{\mathcal{M}}^{(l)}$, one can find the most possible transported mass $s^{(l)}$ by simply solving the following sOT problem:
\begin{align}\label{eqn:sOT_computemass}
    L_{\text{sOT}}(\mathbf{a},\mathbf{b};\mathbf{C}) 
    = \min_{\mathbf{P} \in \mathbf{U}(\leq \mathbf{a}, \leq \mathbf{b}) \cap \mathcal{T}_{\mathcal{M}}^{(l)}} \langle\mathbf{P}, \mathbf{C}\rangle_F+ \gamma \Big(\| \mathbf{a}-\mathbf{P}\mathbf{1}  \|_1 + \| \mathbf{b}-\mathbf{P}^{\mathrm{T}} \mathbf{1} \|_1\Big) \quad \emph{for} \ \gamma\gg1.
\end{align}
Here we take $\mathbf{C}$ to be the all-one matrix. Once a minimizer $\mathbf{P}^*$ is found by using the sOT solver introduced in \cite{cang2022supervised}, we take $s^{(l)} = \langle \mathbf{P}^*, \mathbf{1} \rangle$. 

While only considering MVCs will significantly reduce the original admissible set, there could still be numerous MVCs for a relatively large graph $G(V,E)$, and obtaining all MVCs is an NP-hard problem. However, using various experiments, we demonstrate that a small set of MVCs is likely sufficient to obtain nearly optimal solutions in practice. We numerically verified that the most possible transported mass $s$ by a $V_{\text{min}}$ statistically decreases with respect to the size of $V_{\text{min}}$, as illustrated in Fig. \ref{fig:StatisticalTrend}. In the figure, we analyze point clouds of two disks with radius 1 and 2, respectively. Fixing the threshold $\rho = 0.2$, we apply GVC algorithm described in Section \ref{subsubsection:GreddyMVC} to generate MVCs and then use the sOT solver from Section \ref{Section:sOT_solver} to compute the corresponding transported mass $s$. Before each simulation by the GVC algorithm, we randomly pick a percentage $p_{\text{mvc}}$ between 0 and 50\%, and then randomly fix $p_{\text{mvc}}$-percent vertices from the graph $G(V,E)$ in (\ref{eqn:graphG}). Subsequently, we employ the GVC algorithm on the remaining $(1-p_{\text{mvc}})$-percent vertices to generate a vertex cover (VC). This VC combined with the fixed $p_{\text{mvc}}$-percent vertices are then trimmed to obtain an MVC. This process, depicted in the middle subfigure, results in 100 MVCs, with the corresponding transported mass. An evident statistical decrease in the transported mass $s$ is observed as the size of the MVCs increases. In the right subfigure, we examine this trend using various values of $\rho$, each point generated by using the same method as in the middle subfigure. This result still reveals a similar decline in the transported mass with increasing MVC size. 

\begin{figure}[t] 
\begin{center}
\includegraphics[width=0.3\linewidth]{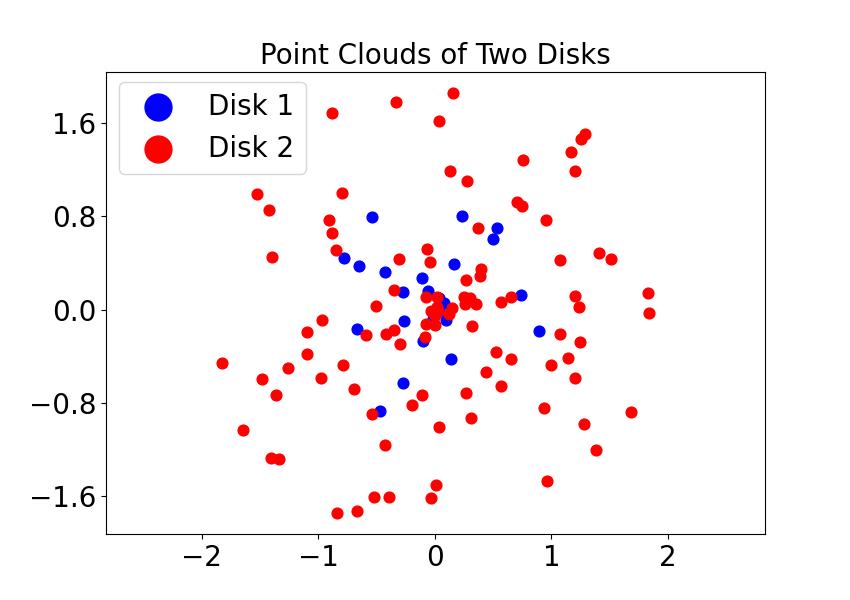}
\includegraphics[width=0.3\linewidth]{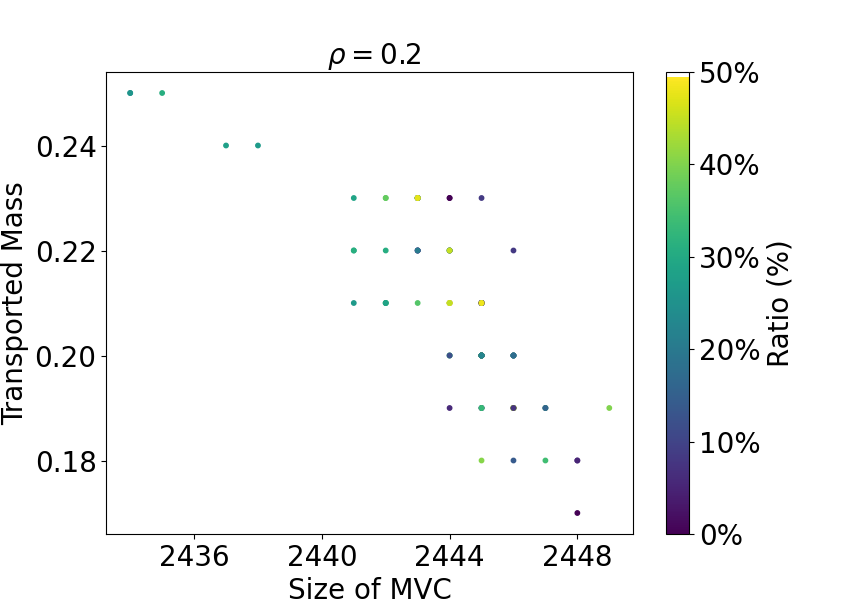}
\includegraphics[width=0.3\linewidth]{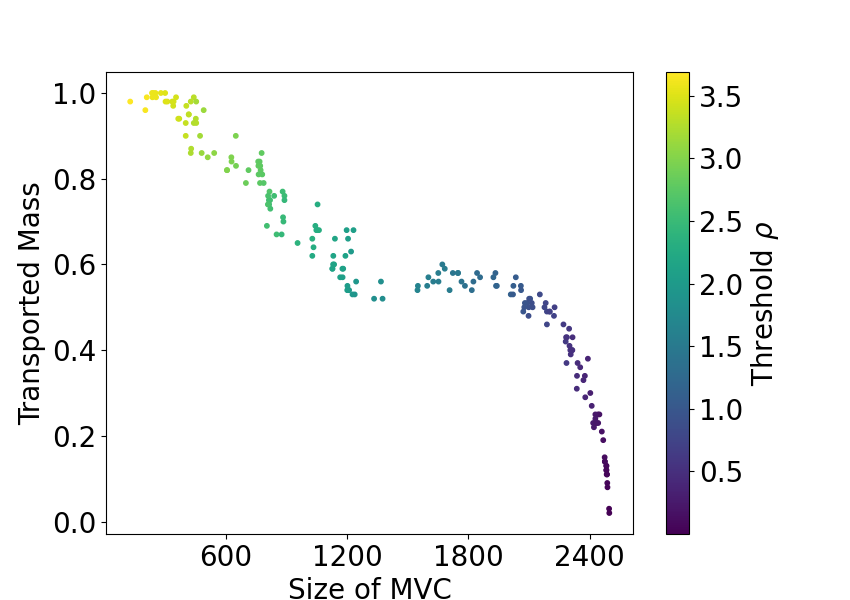}
\end{center}
\caption{Statistically decreasing trend of the transported mass $s$ with respect to the size of MVC. Left: two point clouds generated over two disks, one of radius 1 (blue), the other of radius 2 (red). Middle: for fixed $\rho = 0.2$, 100 MVCs are generated and the corresponding transported masses are calculated by sOT solver. A clear decreasing trend is observed. Right: for various values of $\rho$, MVCs and the corresponding masses are calculated, from which a similar decreasing trend is presented.}
\label{fig:StatisticalTrend}
\end{figure}

Therefore, in practice, we can select a small subset of all MVCs, say hundreds of them, compute the transported mass for each selected MVC, and then choose the one with the greatest transported mass (and also statistically the one with the smallest size), denoted by $\mathcal{T}_{\mathcal{M}}^*$, with transported mass $s^*$. Then the sGW (\ref{eqn:sGW03}) can be approximately reformulated as
\begin{align}\label{eqn:sGW04}
    \text{sGW} = \max_{s \in [0, \min \{ \|\mathbf{a}\|_{1}, \|\mathbf{b}\|_{1} \} ]} \min_{\mathbf{P} \in \mathbf{U}(\leq \mathbf{a}, \leq \mathbf{b}) \cap \mathcal{T}_\mathcal{M}^{*} } \Big\{ \frac{1}{2} \langle \mathcal{M}, \mathbf{P} \otimes \mathbf{P} \rangle_F: \langle \mathbf{P} \otimes \mathbf{P} , \mathbf{1} \rangle_F= s^2 \Big\}.
\end{align}

\subsection{sGW in Penalty Form}

Note that the approximate sGW formulation (\ref{eqn:sGW04}) is a bilevel optimization, which is still challenging for numerical implementation. For the sake of easy implementation, we introduce an $l^1$-penalized reformulation to further approximate (\ref{eqn:sGW04}) which is given below.
\begin{definition}
The approximate supervised Gromov-Wasserstein optimal transport problem (\ref{eqn:sGW04}) can be recast into the following $l^1$-penalized 4th-order tensor optimization 
\begin{align}\label{eqn:sGW05}
\mathrm{sGW} = 
 \min _{\mathbf{P} \in \mathbf{U}(\leq \mathbf{a}, \leq \mathbf{b}) \cap \mathcal{T}_\mathcal{M}^{*}  } \frac{1}{2} \langle \mathcal{M}, \mathbf{P} \otimes \mathbf{P} \rangle_F + \gamma \left( \| \mathbf{a}-\mathbf{P} \mathbf{1}  \|_{1} + \| \mathbf{b}-\mathbf{P}^{\mathrm{T}} \mathbf{1} \|_{1} \right), \text{ for } \gamma \gg 1,
\end{align}
where the tensor cost $\mathcal{M} = \mathcal{M}\left( \mathbf{D}^{1}, \mathbf{D}^2 \right)$ defined in Eq. (\ref{eqn:4thorder_tensor}) contains $\infty$ elements. 
\end{definition}

It's worth pointing out that the $l^1$-penalized $\gamma$-term is due to the constraint that we need to transport the maximum possible marginal mass from $\mathbf{a}$ to $\mathbf{b}$. A sufficiently large penalty constant $\gamma$ is taken to enforce this constraint.

\subsection{Properties of sGW and Connection to Other Variants}

In Section \ref{subsection:sOT}, we discussed the relationship between sOT and POT. Essentially, sOT addresses the POT problem with the maximum possible total mass $s=\min\{\|\mathbf{a}\|_1,\|\mathbf{b}\|_1\}$, when the cost matrix $\mathbf{C}$ contains no $\infty$ entries. The sGW shares a similar relationship with PGW. In fact, sGW (\ref{eqn:sGW01}) degenerates to the PGW with $s=\min \{\|\boldsymbol{a}\|_1,\|\boldsymbol{b}\|_1\}$ provided that the tensor cost $\mathcal{M}$ contains no $\infty$ entries.

The equivalence between the bilevel formulation (\ref{eqn:sOT01}) of sOT and the $l^1$-penalized formulation (\ref{eqn:sOT03}) has been rigorously proven in \cite{cang2022supervised}. However, it remains unclear whether the bilevel formulation (\ref{eqn:sGW01}) of sGW is equivalent to the $l^1$-penalized formulation (\ref{eqn:sGW02}). Here we take $l^1$-penalized formulation (\ref{eqn:sGW02}) as an approximation to the bilevel formulation (\ref{eqn:sGW01}).

To numerically solve the sGW problem presented in \ref{eqn:sGW05}, we first seek a proper MVC from the graph $G(V,E)$ which can be computationally costly. Consequently, our approach involves downsampling the point clouds and solving MVC and the corresponding sGW (\ref{eqn:sGW05}) on two smaller datasets, followed by upsampling the results to the original datasets. Further details can be found in Section \ref{subsection:Algorithm}.

The traditional iterative methods for solving GW, PGW, and UGW may not be suitable to sGW due to the presence of an $\infty$-pattern in the tensor cost $\mathcal{M}$. On the other hand, the sOT solver in \cite{cang2022supervised} can not be directly applied to sGW (\ref{eqn:sGW05}). This is because, if one directly transfers (\ref{eqn:sGW05}) into an iterative form with a pre-calculated $\mathcal{T}_{\mathcal{M}}^*$:
\begin{align}\label{eqn:sGW_Iteration01}
\mathbf{P}^{(k+1)} =
 \argmin _{\mathbf{P} \in \mathbf{U}(\leq \mathbf{a}, \leq \mathbf{b}) \cap  \mathcal{T}_\mathcal{M}^*   }  \frac{1}{2} \langle \mathcal{M}\circ \mathbf{P}^{(k)}, \mathbf{P} \rangle_F + \gamma \left( \| \mathbf{a}-\mathbf{P} \mathbf{1}  \|_{1} + \| \mathbf{b}-\mathbf{P}^{\mathrm{T}} \mathbf{1} \|_{1} \right),
\end{align}
the $\infty$-pattern of $\mathcal{M}$ at $k$-th iteration will change due to the influence of $\mathbf{P}^{(k)}$. To address this issue, we will replace the $\infty$-pattern equivalently by the $0$-pattern of $\mathbf{P}$ as defined by $\mathcal{T}_{\mathcal{M}}^*$, and fix the $0$-pattern in $\mathbf{P}$ at each iteration. When applying the sOT solver to the iteration (\ref{eqn:sGW_Iteration01}), we will assign an $\infty$-pattern to $\mathcal{M}\circ \mathbf{P}^{(k)}$ according to the $0$-pattern of $\mathbf{P}$.

The threshold parameter $\rho$ quantitatively controls the geometric similarity between two datasets and introduces a diffusion effect on the matching as will be explained in Section \ref{subsection:Algorithm}. Following the sOT heuristics, the $l^1$ penalty in sGW respects the polytope structure of the admissible set and discourages the undesirable splitting matchings, especially when $\mathbf{a}$ and $\mathbf{b}$ are two uniform distributions.

\subsection{Numerical Algorithms for sGW}\label{subsection:Algorithm}

To numerically solve the sGW problem, we incorporate the conventional entropy regularization into (\ref{eqn:sGW04}) and define the entropy regularized sGW as:
\begin{align}\label{eqn:sGW06}
    \text{sGW}^{\epsilon} =
    \min _{\mathbf{P} \in \mathbf{U}(\leq \mathbf{a}, \leq \mathbf{b}) \cap  \mathcal{T}_\mathcal{M}^*}  \frac{1}{2}\langle \mathcal{M}, \mathbf{P} \otimes \mathbf{P} \rangle_F - \epsilon H(\mathbf{P}) + \gamma \left( \| \mathbf{a}-\mathbf{P} \mathbf{1} \|_{1} + \| \mathbf{b}-\mathbf{P}^{\mathrm{T}} \mathbf{1} \|_{1} \right).
\end{align}
Now we aim to solve the entropy regularized sGW (\ref{eqn:sGW06}). In this subsection, we present the full algorithm for solving (\ref{eqn:sGW06}) and prove its convergence. There are mainly two steps for solving the entropy regularized sGW (\ref{eqn:sGW06}). Step I is to find a proper MVC $V_{\text{min}}^*$ from the graph $G(V,E)$ defined in (\ref{eqn:graphG}) and construct $\mathcal{T}_{\mathcal{M}}^*$. Step II is to iteratively solve for the optimal plan $\mathbf{P}^*$ given the $V_{\min}^*$.

\subsubsection{Greedy MVC Algorithm}\label{subsubsection:GreddyMVC}

In graph theory, finding MVCs \cite{erickson2019algorithms,cormen2022introduction} is a classic optimization problem, especially for finding the MVC of the smallest size, that is, the minimum vertex cover(s). Formally, given an undirected graph $G=(V, E)$ with $V$ representing the set of vertices and $E$ representing the set of edges, a vertex cover $VC \subseteq V$ is a set of vertices that includes at least one endpoint of every edge of the graph. In other words, for every edge $(u, v) \in E$, either $u$ is in $VC$ or $v$ is in $VC$ or both. a minimal vertex cover (MVC) is defined as a vertex cover that does not contain another vertex cover. 
Finding all MVCs and the minimum vertex covers is known to be NP-hard, making it computationally intractable for large graphs. Unless the graph has a bipartite or tree structure, there is currently no known polynomial-time algorithm capable of solving all instances of this problem.

A classical algorithm for the MVC problem is to obtain a vertex cover with size at most twice the optimal one. The algorithm selects an edge from the graph $G$, includes both of its endpoints into the vertex cover, and then eliminates these two vertices along with all their adjacent edges from $G$. This procedure is iteratively executed until there are no remaining edges in $G$ (see \cite{cormen2022introduction} section 35.1). However, this approach always introduces numerous extra unrelated vertices $P_{ij}$ and contradicts the geometric significance of the matching. Given a sufficiently large value for $\gamma$ in the sGW, we prefer to construct an MVC by collecting the vertex of the highest degree and deleting all its related edges and vertices in each iteration. This is the so-called Greedy Vertex Cover (GVC) algorithm \cite{erickson2019algorithms} (see Algorithm \ref{alg:gvc}). In addition, we introduce some randomness in selecting the vertex of the highest degree in each iteration, as there may be multiple vertices of the highest degree. Note that the GVC algorithm only generates an approximate, rather than exact, MVC. However, we numerically confirm that a large part of the outcomes generated by GVC algorithm are MVC. For example, 90\% of the VCs generated by GVC are MVCs in the point clouds examples in Figure \ref{fig:cross_example}. 

It is important to stress that even after we use GVC algorithm to generate $N$ outcomes, and the one transported the most mass $s^*$ is not a MVC, denoted by $V^*$, we can still use $V^*$ to define $\mathcal{T}_{\mathcal{M}}^*$ through the definition (\ref{eqn:T_M}) and then solve the entropic sGW (\ref{eqn:sGW06}). These approximations are effective for our goal since we seek VCs with small sizes that likely lead to large transported mass, rather than requiring exact MVCs.
By generating sufficiently many outcomes, which may or may not be MVC, from the GVC algorithm, we choose the one with the greatest transported mass. This completes the construction of $\mathcal{T}_\mathcal{M}^*$.

\begin{algorithm}

\caption{GVC algorithm for the construction of $\mathcal{T}_\mathcal{M}^*$}\label{alg:gvc}

Input: distance matrices $\textbf{D}^1$ and $\textbf{D}^2$; threshold parameter $\rho > 0$; 

Step I: Construct graph $G(V,E)$ from the 4th-order tensor $\mathcal{M}$ in (\ref{eqn:Cost_cut}):
    \begin{itemize}
        \item Set $V = \{v_{i,j}\}_{1\le i \le n, 1\le j \le m}$,
        \item Build an edge $(v_{ij}, v_{kl}) \in E $  if and only if $\mathcal{M}_{ijkl} > \rho$.\\
    \end{itemize}
    
Step II: Greedy minimum vertex cover of $G(V,E):$

\qquad for $l = 1, \cdots, N_{\text{mvc}}$

\qquad \qquad $V_{\min}^{(l)} \leftarrow \varnothing$, \quad $G_0 \leftarrow G(V,E)$, \quad $p \leftarrow 0$ 

\qquad \qquad  while $G_p$ has at least one edge 

\qquad\qquad\qquad $p \leftarrow p+1$ ;

\qquad\qquad\qquad $v_p \leftarrow $ choose a vertex in $G_{p-1}$ with maximum degree randomly;

\qquad\qquad\qquad  $d_p \leftarrow \operatorname{deg}_{G_{p-1}}\left(v_p\right)$ ;

\qquad\qquad\qquad  $G_p \leftarrow G_{p-1} \backslash \{v_p\}$ ;

\qquad\qquad\qquad  $V_{\min}^{(l)} \leftarrow V_{\min}^{(l)} \cup \{v_p\}$ ;

\qquad return $\{V_{\min}^{(l)}\}_{l=1}^{N_{\text{mvc}}}$. \\

Step III: Find the transported mass $s^{(l)}$ for each $V_{\min}^{(l)}$ by sOT solver in Algorithm \ref{alg:sot} by taking $\mathbf{K} = e^{-\mathbf{C}/\epsilon}$ with $\mathbf{C}$ being an all-one matrix with an $\infty$-pattern consistent with $V_{\min}^{(l)}$:

\qquad for $l = 1, \cdots, N_{\text{mvc}}$

\qquad \qquad find the optimal $\mathbf{P}^{(l)}$ by sOT solver in Algorithm \ref{alg:sot};

\qquad \qquad $s^{(l)} = \langle \mathbf{P}^{(l)}, \mathbf{1} \rangle_F$;

\qquad return $\{s^{(l)}\}_{l=1}^{N_{\text{mvc}}}$\\

Step IV: Select the largest transported mass $s^{(l^*)}$, set $V_{\text{min}}^* = V_{\min}^{(l^*)}$ and construct $\mathcal{T}_\mathcal{M}^*$ by (\ref{eqn:T_M}).

\end{algorithm}

\subsubsection{Supervised Optimal Transport Solver}\label{Section:sOT_solver}

After the construction of $\mathcal{T}_{\mathcal{M}}^*$ to assign $0$-pattern for $\mathbf{P}$, we adopt the KL Mirror-C descent algorithm \cite{beck2017first} for the non-convex objective function in (\ref{eqn:sGW06}). To this end, we rewrite (\ref{eqn:sGW06}) as
\begin{align} \label{eqn:noncovex_objective}
\min_{\mathbf{P} \in \mathbb{R}_{+}^{n \times m} } \mathcal{J}(\mathbf{P})+\mathcal{K}(\mathbf{P})
\end{align}
where
\begin{align*}
\mathcal{J}(\mathbf{P}) &= \frac{1}{2} \langle \mathcal{M}, \mathbf{P} \otimes \mathbf{P}\rangle_F,\\
\mathcal{K}(\mathbf{P}) &= -\epsilon H(\mathbf{P}) + \iota_{\mathbf{U}(\leq \mathbf{a}, \leq \mathbf{b}) \cap \mathcal{T}_\mathcal{M}^*} (\mathbf{P}) + \gamma \left( \| \mathbf{a}-\mathbf{P} \textbf{1}  \|_{1} + \| \mathbf{b}-\mathbf{P}^\mathrm{T} \textbf{1} \|_{1} \right).
\end{align*}
Here the term $\iota_{\mathbf{U}(\leq \mathbf{a}, \leq \mathbf{b}) \cap \mathcal{T}_\mathcal{M}^*}$ in $\mathcal{K}(\mathbf{P})$ is the indicator of the admissible set. More specifically, we have the following lemma when applying Mirror-C algorithm to entropic sGW (\ref{eqn:sGW06}).
\begin{lemma}
When applying the Mirror-C algorithm to the entropic sGW problem (\ref{eqn:sGW06}), we obtain the following iterative scheme:
\begin{align}\label{eqn:MirrorC_01}
    \mathbf{P}^{(k+1)} = \underset{\mathbf{P}\in \mathcal{T}_\mathcal{M}^*}{\mathrm{argmin}} \  \emph{KL}(\mathbf{P}|\mathbf{Q}^{(k)}) + h_1(\mathbf{P1}) + h_2(\mathbf{P}^{\top}\mathbf{1}),
\end{align}
where
\begin{align*}
    &\mathbf{Q}^{(k)} = \exp \left( -\frac{\Delta t}{1+\epsilon\Delta t} \mathcal{M}\circ \mathbf{P}^{(k)} + \frac{1}{1+\epsilon\Delta t} \log \mathbf{P}^{(k)} \right), \\
    &h_1(\mathbf{P1}) = \frac{\Delta t}{1+\epsilon\Delta t}\gamma \|\mathbf{a}-\mathbf{P1}\|_1 + \iota_{[0,\mathbf{a}]}(\mathbf{P1}),\\
    &h_2(\mathbf{P}^{\top}\mathbf{1}) = \frac{\Delta t}{1+\epsilon\Delta t}\gamma \|\mathbf{b}-\mathbf{P}^{\top}\mathbf{1}\|_1 + \iota_{[0,\mathbf{b}]}(\mathbf{P}^{\top}\mathbf{1}).
\end{align*}
\end{lemma}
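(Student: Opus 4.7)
The plan is to write down the generic KL Mirror-C update for the composite problem \eqref{eqn:noncovex_objective}, compute the linearization of the smooth part $\mathcal{J}$, and then absorb the entropy piece of $\mathcal{K}$ into the Bregman divergence so that the remaining quadratic-in-$\log\mathbf{P}$ expression collapses into a single $\mathrm{KL}(\mathbf{P}|\mathbf{Q}^{(k)})$ term. First I would recall that Mirror-C with stepsize $\Delta t$ and the KL Bregman divergence prescribes
\begin{equation*}
\mathbf{P}^{(k+1)} = \argmin_{\mathbf{P}\in\mathbb{R}_+^{n\times m}} \Big\{ \langle \nabla\mathcal{J}(\mathbf{P}^{(k)}), \mathbf{P}\rangle_F + \mathcal{K}(\mathbf{P}) + \tfrac{1}{\Delta t}\, \mathrm{KL}(\mathbf{P}|\mathbf{P}^{(k)}) \Big\},
\end{equation*}
and then verify the gradient identity $\nabla\mathcal{J}(\mathbf{P}) = \mathcal{M}\circ\mathbf{P}$, using the symmetry $\mathcal{M}_{ijkl}=\mathcal{M}_{klij}$ which follows from the definition \eqref{eqn:4thorder_tensor}.

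Next, I would collect all $\mathbf{P}$-dependent entropic terms. The contributions are $-\epsilon H(\mathbf{P}) = \epsilon \sum_{ij} P_{ij}(\log P_{ij}-1)$ from $\mathcal{K}$, the Bregman penalty $\tfrac{1}{\Delta t}\sum_{ij}[P_{ij}\log P_{ij} - P_{ij}\log P_{ij}^{(k)} - P_{ij} + P_{ij}^{(k)}]$, and the linear term $\sum_{ij}P_{ij}(\mathcal{M}\circ\mathbf{P}^{(k)})_{ij}$. Grouping the $P_{ij}\log P_{ij}$ coefficients yields $(\tfrac{1}{\Delta t}+\epsilon) = \tfrac{1+\epsilon\Delta t}{\Delta t}$. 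Factoring this out and completing the expression to the form $\tfrac{1+\epsilon\Delta t}{\Delta t}\sum_{ij}[P_{ij}\log P_{ij} - P_{ij}\log Q^{(k)}_{ij} - P_{ij} + Q^{(k)}_{ij}]$ (which is $\tfrac{1+\epsilon\Delta t}{\Delta t}\mathrm{KL}(\mathbf{P}|\mathbf{Q}^{(k)})$ up to an additive constant independent of $\mathbf{P}$) forces the identification
\begin{equation*}
\log Q^{(k)}_{ij} = \tfrac{1}{1+\epsilon\Delta t}\log P^{(k)}_{ij} - \tfrac{\Delta t}{1+\epsilon\Delta t}(\mathcal{M}\circ\mathbf{P}^{(k)})_{ij},
\end{equation*}
which is exactly the claimed formula for $\mathbf{Q}^{(k)}$.

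Finally, I would dispose of the remaining pieces of $\mathcal{K}$: split $\iota_{\mathbf{U}(\leq\mathbf{a},\leq\mathbf{b})\cap \mathcal{T}_\mathcal{M}^*}(\mathbf{P}) = \iota_{\mathcal{T}_\mathcal{M}^*}(\mathbf{P}) + \iota_{[0,\mathbf{a}]}(\mathbf{P}\mathbf{1}) + \iota_{[0,\mathbf{b}]}(\mathbf{P}^\top\mathbf{1})$ using the fact that the row/column sum constraints decouple from the sparsity constraint. Dividing the entire objective by the positive constant $\tfrac{1+\epsilon\Delta t}{\Delta t}$ (which does not alter the minimizer) rescales the $\gamma$-weights and the marginal indicators by the factor $\tfrac{\Delta t}{1+\epsilon\Delta t}$, absorbing the indicators $\iota_{[0,\mathbf{a}]}$ and $\iota_{[0,\mathbf{b}]}$ into the functions $h_1$ and $h_2$ as stated (the indicator is invariant under positive scaling). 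Keeping the $\iota_{\mathcal{T}_\mathcal{M}^*}$ term as the constraint $\mathbf{P}\in\mathcal{T}_\mathcal{M}^*$ of the argmin then produces exactly \eqref{eqn:MirrorC_01}.

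The main obstacle I anticipate is bookkeeping rather than any deep difficulty: one must be careful that the ``$-1$'' constants in $-\epsilon H(\mathbf{P})$ and in $\mathrm{KL}$ combine consistently so that the resulting expression truly equals $\mathrm{KL}(\mathbf{P}|\mathbf{Q}^{(k)})$ modulo constants independent of $\mathbf{P}$, and that the factor $\tfrac{1+\epsilon\Delta t}{\Delta t}$ is pulled out correctly so that the $l^1$ penalties and indicator constraints end up with the advertised coefficient $\tfrac{\Delta t}{1+\epsilon\Delta t}$. A secondary subtlety is justifying the use of $\nabla\mathcal{J}(\mathbf{P}^{(k)})=\mathcal{M}\circ\mathbf{P}^{(k)}$ in Mirror-C, which relies on the symmetry of $\mathcal{M}$ in the index swap $(i,j)\leftrightarrow(k,l)$ so that the quadratic $\tfrac12\langle\mathcal{M},\mathbf{P}\otimes\mathbf{P}\rangle_F$ differentiates without an extra symmetrization factor.
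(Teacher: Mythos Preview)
Your proposal is correct and follows essentially the same route as the paper: start from the generic KL Mirror-C update, use $\nabla\mathcal{J}(\mathbf{P})=\mathcal{M}\circ\mathbf{P}$, merge the two entropy contributions into a single $\mathrm{KL}(\mathbf{P}|\mathbf{Q}^{(k)})$, and read off $h_1,h_2$ after pulling out the factor $\tfrac{1+\epsilon\Delta t}{\Delta t}$. Your write-up is in fact more explicit than the paper's about the rescaling step and about the symmetry $\mathcal{M}_{ijkl}=\mathcal{M}_{klij}$ needed for the gradient identity, but the argument is the same.
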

\begin{proof}
    Applying the Mirror-C algorithm to sGW problem, we have
    \begin{align*}
        \mathbf{P}^{(k+1)} & = \underset{\mathbf{P}}{\mathrm{argmin}}\left\{ \left\langle \nabla \mathcal{J}(\mathbf{P}^{(k)}), \mathbf{P} \right\rangle + \mathcal{K}(\mathbf{P}) + \frac{1}{\Delta t} \text{KL}(\mathbf{P}|\mathbf{P}^{(k)}) \right\}.
    \end{align*}
    Noting that $\nabla \mathcal{J}(\mathbf{P}) = \mathcal{M}\circ \mathbf{P}$, it follows that
    \begin{align*}
        \mathbf{P}^{(k+1)} & = \underset{\mathbf{P}\in\mathcal{T}_{\mathcal{M}}^*}{\mathrm{argmin}}
        \Big\{ 
        \Delta t 
        \left\langle 
        \mathcal{M}\circ \mathbf{P}^{(k)}, \mathbf{P} 
        \right\rangle 
        -\epsilon \Delta t H(\mathbf{P}) + \iota_{\mathbf{U}(\le\mathbf{a},\le\mathbf{b})}(\mathbf{P}) \\
        & \hspace{1.2in} +  \gamma \Delta t \left( \| \mathbf{a}-\mathbf{P} \textbf{1}  \|_{1} + \| \mathbf{b}-\mathbf{P}^\mathrm{T} \textbf{1} \|_{1} \right)  + \big(-H(\mathbf{P}) - \langle \mathbf{P}, \log \mathbf{P}^{(k)} \rangle \big) 
        \Big\}.    
    \end{align*}
    Combining the two terms of $H(\mathbf{P})$ and noting that $-H(\mathbf{P}) - \langle \mathbf{P}, \mathbf{Q} \rangle = \text{KL}(\mathbf{P}|\exp(\mathbf{Q}))$ for a generic $\mathbf{Q}$, it yields the Mirror-C iteration (\ref{eqn:MirrorC_01}).
\end{proof}

The iteration (\ref{eqn:MirrorC_01}) aligns with the sOT solver \cite{cang2022supervised}. The only difference is that we specify the $0$-pattern of $\mathbf{P}$ rather than the $\infty$-pattern of $-\log \mathbf{Q}^{(k)}$. This can be easily rectified by assigning an identical $0$-pattern to $\mathbf{Q}^{(k)}$ from $\mathbf{P} \in \mathcal{T}_{\mathcal{M}}^*$. We present the sOT solver as follows for easy reference; for more detailed derivation, please refer to \cite{cang2022supervised}.

\begin{algorithm}

\caption{sOT solver in log-domain}\label{alg:sot}

Input: $\mathbf{f}^{(0)}=\mathbf{g}^{(0)}=\mathbf{0}$, $\mathbf{K} = \mathbf{Q}^{(k)}$; \\
General step: for $l=0, 1, 2, \cdots$, execute the following steps:
\begin{align}
    &\mathbf{f}^{(l+1)} = \min \left\{ \epsilon\log(\mathbf{a}) -  \epsilon \log \left( \Big( e^{(\mathbf{f}^{(l)}\oplus \mathbf{g}^{(l)})/\epsilon} \odot \mathbf{K} \Big) \mathbf{1} \right) + \mathbf{f}^{(l)}  ,  \frac{\Delta t}{1+\epsilon\Delta t}\gamma \right\}, \\
    &\mathbf{g}^{(l+1)} = \min \left\{ \epsilon\log(\mathbf{b}) -  \epsilon \log \left( \Big( e^{(\mathbf{f}^{(l+1)}\oplus \mathbf{g}^{(l)})/\epsilon} \odot \mathbf{K} \Big)^{\top} \mathbf{1} \right) + \mathbf{g}^{(l)}, \frac{\Delta t}{1+\epsilon\Delta t}\gamma \right\}.
\end{align}

Return $\mathbf{P}=e^{(\mathbf{f}^{(N)}\oplus \mathbf{g}^{(N)} - \mathbf{C})/\epsilon}$, assuming the iteration is stopped at $N$-th step.

\end{algorithm}

\subsubsection{sGW Solver: Algorithm and Convergence Analysis}

The full solver for sGW problem (\ref{eqn:sGW06}) is the combination of GVC in Algorithm \ref{alg:gvc}, Mirror-C iteration (\ref{eqn:MirrorC_01}), and the sOT solver in Algorithm \ref{alg:sot}. 

\begin{algorithm}

\caption{sGW solver}\label{alg:sgw}

Input: entropy coefficient $\epsilon$, threshold $\rho$, penalty $\gamma$, $\mathcal{M}$ as in (\ref{eqn:4thorder_tensor}).\\
Step I: Run GVC (Algorithm 1) to obtain a proper MVC $V_{\text{min}}^*$ and the $\mathcal{T}_{\mathcal{M}}^*$.\\
Step II: Initialization
\begin{itemize}
    \item $\mathbf{P}^{(0)}= \mathbf{a} \otimes \mathbf{b}$,
    \item $\mathbf{Q}^{(0)} = \exp \left( -\frac{\Delta t}{1+\epsilon\Delta t} \mathcal{M}\circ \mathbf{P}^{(0)} + \frac{1}{1+\epsilon\Delta t} \log \mathbf{P}^{(0)} \right) $,
    \item $Q^{(0)}_{ij} \leftarrow 0$ if $v_{ij} \in V_{\text{min}}^*$.
\end{itemize}

Step III: for $k=0,1,2,\cdots$, execute the following steps:

\begin{itemize}
\item update $\mathbf{P}^{(k+1)}$ by sOT solver (Algorithm \ref{alg:sot}),
\item update $\mathbf{Q}^{(k+1)} = \exp \left( -\frac{\Delta t}{1+\epsilon\Delta t} \mathcal{M}\circ \mathbf{P}^{(k+1)} + \frac{1}{1+\epsilon\Delta t} \log \mathbf{P}^{(k+1)} \right)$,
\item $Q^{(k+1)}_{ij} \leftarrow 0$ if $v_{ij} \in V_{\text{min}}^*$.
\end{itemize}
\end{algorithm}

The convergence of the sGW solver in Algorithm \ref{alg:sgw} consists of two parts, one is the convergence of the sOT solver in Algorithm \ref{alg:sot}, and the other is the convergence of the Mirror-C update in (\ref{eqn:MirrorC_01}).
The convergence of the sOT solver has been well established in \cite{peyre2015entropic}. Indeed, \cite{peyre2015entropic} proves the convergence of the Dykstra iteration when applying to a more general minimization problem:
\begin{align}\label{eqn:Dykstra_general}
\min_{\mathbf{P}\in\mathbb{R}_+^{n\times m}} B_g(\mathbf{P}|\mathbf{K}) + \hat{h}_1(\mathbf{P}) + \hat{h}_2(\mathbf{P}).
\end{align}
Here $g$ is a given proper closed and strictly convex and differentiable function, and $B_g$ is the Bregman divergence defined as 
\[
B_g(\mathbf{P}|\mathbf{Q}) = g(\mathbf{P}) - g(\mathbf{Q}) - \langle \nabla g(\mathbf{Q}), \mathbf{P} - \mathbf{Q} \rangle.
\]
In Proposition 3.1 of \cite{peyre2015entropic}, it proves that if the following condition holds
\begin{align*}
\operatorname{ri}(\operatorname{dom}(\hat{h}_1)) \cap \operatorname{ri}(\operatorname{dom}(\hat{h}_2)) \cap \operatorname{ri}(\operatorname{dom}(g)) \neq \emptyset \operatorname{~},
\end{align*}
then the Dykstra iterate $\mathbf{P}^{(k)}$ converges to the solution of the general minimization problem (\ref{eqn:Dykstra_general}).

Since $B_g = \text{KL}$ when $g$ is the entropy function, by taking $\hat{h}_1$ and $\hat{h}_2$ as $h_1$ and $h_2$ in (\ref{eqn:MirrorC_01}), respectively, we can easily verify that the intersection of three relative interiors is exactly the nonempty admissible set
\begin{align}
\operatorname{ri}(\operatorname{dom}(\hat{h}_1)) \cap \operatorname{ri}(\operatorname{dom}(\hat{h}_2)) \cap \operatorname{ri}(\operatorname{dom}(g)) = 
    \mathbf{U}(\leq \mathbf{a}, \leq \mathbf{b}) \cap M_{\mathbf{C}}^+  
\end{align}
where $\mathbf{C} = -\log \mathbf{Q}^{(k)}$ with the $\infty$-pattern specified by $\mathcal{T}_{\mathcal{M}}^*$. Therefore, the sOT solver in the inner loop converges.

The convergence of the KL Mirror-C descent for sGW problem is due to \cite{boct2016inertial}. Given
$
\mathcal{J}(\mathbf{P}) = \frac{1}{2}\langle \mathcal{M}, \mathbf{P} \otimes \mathbf{P}\rangle_F
$
and the admissible set $\mathbf{U}(\leq \mathbf{a}, \leq \mathbf{b}) \cap \mathcal{T}_{\mathcal{M}}^*$. The Lipschitz constant of $\nabla \mathcal{J}(\mathbf{P})$ is defined by $ L_{\nabla \mathcal{J}} =\sup \frac{\|\nabla \mathcal{J}(\mathbf{P}^1)- \nabla \mathcal{J}(\mathbf{P}^2\|_F} {\|\mathbf{P}^1-\mathbf{P}^2 \|_F}$ for $\mathbf{P}^1,\mathbf{P}^2 \in \mathbf{U}(\leq \mathbf{a}, \leq \mathbf{b}) \cap \mathcal{T}_{\mathcal{M}}^* $ and $ \mathbf{P}^1 \neq \mathbf{P}^2.$ Notice that 
    \begin{align*}
       \|\nabla \mathcal{J}(\mathbf{P}^1)- \nabla \mathcal{J}(\mathbf{P}^2) \|_F^2 
        &=  \| \mathcal{M} \circ (\mathbf{P}^1 - \mathbf{P}^2) \|_F^2 \\
       &=  \sum_{i=1}^n \sum_{j=1}^m \left( \sum_{k=1}^n \sum_{l=1}^m \mathcal{M}_{i j k l} (P^1_{k l}-P^2_{k l}) \right)^2\\
       &\leq  \left( \max (\max_{i,k} D^1_{ik}, \max_{j,l} D^2_{jl}) \right)^4 \sum_{i=1}^n \sum_{j=1}^m \left( nm \sum_{k=1}^n \sum_{l=1}^m (P^1_{k l}-P^2_{k l})^2 \right)\\
       &=  \left( \max (\max_{i,k} D^1_{ik}, \max_{j,l} D^2_{jl}) \right)^4 n^2 m^2 \| \mathbf{P}^1-\mathbf{P}^2 \|_F^2.
    \end{align*}
Therefore, $L_{\nabla \mathcal{J}}$ = $ nm \left( \max (\max_{ik} D^1_{ik}, \max_{jl} D^2_{jl}) \right)^2.$ On the other hand, the entropy function $ y = x(\ln x - 1)$ is $\frac{1}{N}$-strongly convex when $x \in (0, N)$, so $-H(\mathbf{P}) = \langle \mathbf{P}, \log \mathbf{P}-\mathbf{1} \rangle_F$ is $\frac{1}{N}$-strongly convex with $N=\min(\|a\|_{\infty}, \|b\|_{\infty} )$ because $\mathbf{P} \in \mathbf{U}(\leq \mathbf{a}, \leq \mathbf{b}) \cap \mathcal{T}_{\mathcal{M}}^*$ implies that $\mathbf{P}_{ij} \in (0, \min(\|\mathbf{a}\|_{\infty}, \|\mathbf{b}\|_{\infty} ))$.

Adopting the notation in \cite{solomon2016entropic} $\eta = \frac{\epsilon \Delta t}{1+\epsilon \Delta t}$, the Mirror-C update (\ref{eqn:MirrorC_01}) becomes
\begin{align}\label{eqn:MirrorC_02}
\mathbf{P}^{(k+1)} = \underset{\mathbf{P}\in \mathcal{T}_\mathcal{M}^*}{\mathrm{argmin}} \  \text{KL} \left(\mathbf{P} \Big| \left[e^{-\mathcal{M}\circ\mathbf{P}^{(k)}/\epsilon}\right]^{\eta} \odot \left[\mathbf{P}^{(n)}\right]^{(1-\eta)} \right) + h_1(\mathbf{P1}) + h_2(\mathbf{P}^{\top}\mathbf{1}).
\end{align}
Then following Lemma 6 in \cite{boct2016inertial}, to guarantee the convergence of KL Mirror-C descent for non-convex $\mathcal{J}(\mathbf{P})+\mathcal{K}(\mathbf{P})$, we need 
    \begin{align*}
        \frac{1}{N} - \frac{\eta}{\varepsilon (1-\eta)} \cdot L_{\nabla \mathcal{J}} > 0 \iff  
        \frac{\eta}{\varepsilon (1-\eta)} < \frac{1}{N \cdot L_{\nabla \mathcal{J}} } \iff
        \eta < \frac{\varepsilon}{N \cdot L_{\nabla \mathcal{J}} + \varepsilon },
    \end{align*}
or equivalently $\Delta t \le \frac{1}{N\cdot L_{\nabla\mathcal{J}}}$.
We summarize the convergence result in the following theorem.
\begin{theorem}\label{theorem:sGW_conv}
Let $\{ \mathbf{P}^{(k)} \}_{k\ge 0}$ be generated by the sGW solver in Algorithm \ref{alg:sgw}. Then it converges to a critical point of $\mathcal{J}(\mathbf{P})+ \mathcal{K}(\mathbf{P})$ when $\eta < \frac{\varepsilon}{N \cdot L_{\nabla \mathcal{J}} + \varepsilon }$, or equivalently $\Delta t \le \frac{1}{N\cdot L_{\nabla\mathcal{J}}}$, where $N = \min \{ \|\mathbf{a}\|_{\infty}, \|\mathbf{b}\|_{\infty} \}$ and $L_{\nabla\mathcal{J}} = nm \max_{ijkl}\{ D^1_{ik}, D^2_{jl}\}^2$.
\end{theorem}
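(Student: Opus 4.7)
The plan is to prove Theorem \ref{theorem:sGW_conv} by combining the two convergence results that were already assembled in the paragraphs preceding the statement: a Dykstra-type convergence for the inner sOT solver (Algorithm \ref{alg:sot}), and a nonconvex KL Mirror-C convergence for the outer iteration (\ref{eqn:MirrorC_01}). Concretely, I would split the argument into (i) verifying that each inner subproblem produces the exact minimizer of the Mirror-C update restricted to $\mathcal{T}_{\mathcal{M}}^*$, and (ii) invoking the nonconvex descent result of \cite{boct2016inertial} on the outer iteration to conclude convergence to a critical point of $\mathcal{J} + \mathcal{K}$.

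For step (i), I would appeal to Proposition 3.1 of \cite{peyre2015entropic} applied to the template problem (\ref{eqn:Dykstra_general}) with $g = -H$, so that $B_g = \mathrm{KL}$, and with $\hat h_1 = h_1$, $\hat h_2 = h_2$ as in (\ref{eqn:MirrorC_01}). The cost to be passed to sOT is $\mathbf{C} = -\log \mathbf{Q}^{(k)}$, whose $\infty$-pattern is read off from $\mathcal{T}_{\mathcal{M}}^*$ via the enforcement $Q^{(k)}_{ij} \leftarrow 0$ for $v_{ij} \in V_{\mathrm{min}}^*$. One then has
\begin{equation*}
\operatorname{ri}(\operatorname{dom}(h_1)) \cap \operatorname{ri}(\operatorname{dom}(h_2)) \cap \operatorname{ri}(\operatorname{dom}(g)) = \mathbf{U}(\le \mathbf{a}, \le \mathbf{b}) \cap M_{\mathbf{C}}^+,
\end{equation*}
which is nonempty because $\mathcal{T}_{\mathcal{M}}^*$ was constructed from a vertex cover of $G(V,E)$ that admits a strictly positive coupling on its complement. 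This yields exact convergence of the inner loop to $\mathbf{P}^{(k+1)}$.

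For step (ii), I would apply Lemma 6 of \cite{boct2016inertial} to the splitting $\mathcal{J}+\mathcal{K}$ of (\ref{eqn:noncovex_objective}). This requires two quantitative inputs: the strong convexity modulus of the Bregman generator $-H$ and the Lipschitz constant of $\nabla \mathcal{J}$ on the admissible set. Since every admissible $\mathbf{P} \in \mathbf{U}(\le \mathbf{a}, \le \mathbf{b}) \cap \mathcal{T}_{\mathcal{M}}^*$ satisfies $P_{ij} \in [0, N]$ with $N = \min(\|\mathbf{a}\|_\infty, \|\mathbf{b}\|_\infty)$, the negative entropy is $\tfrac{1}{N}$-strongly convex on this domain; and the Cauchy--Schwarz chain displayed right before the theorem gives $L_{\nabla \mathcal{J}} = nm\, \max\{\max_{ik} D^1_{ik}, \max_{jl} D^2_{jl}\}^2$. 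The descent condition of \cite{boct2016inertial} then reads $\tfrac{1}{N} - \tfrac{\eta}{\varepsilon(1-\eta)} L_{\nabla \mathcal{J}} > 0$, which rearranges to $\eta < \varepsilon/(N L_{\nabla \mathcal{J}} + \varepsilon)$, equivalent to $\Delta t \le 1/(N L_{\nabla \mathcal{J}})$ under the reparametrization $\eta = \varepsilon \Delta t / (1 + \varepsilon \Delta t)$.

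The main obstacle will be the bookkeeping around the $\infty$-pattern/$0$-pattern transfer. Standard Mirror-C convergence theorems in \cite{boct2016inertial} are stated for a fixed admissible set, whereas Algorithm \ref{alg:sgw} embeds the $0$-pattern into $\mathbf{Q}^{(k)}$ at every iteration rather than into the feasible polytope. I would therefore need to argue that the post-assignment $Q^{(k)}_{ij} \leftarrow 0$ is equivalent to optimizing over the fixed set $\mathbf{U}(\le \mathbf{a}, \le \mathbf{b}) \cap \mathcal{T}_{\mathcal{M}}^*$, so that the indicator $\iota_{\mathbf{U}(\le\mathbf{a},\le\mathbf{b}) \cap \mathcal{T}_{\mathcal{M}}^*}$ in $\mathcal{K}$ is constant across iterations and the Lipschitz/strong-convexity constants remain valid on this restricted domain. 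Once this identification is made precise, the two convergence statements stitch together: the inner Dykstra loop realizes each outer update exactly, and the outer Mirror-C iterates converge to a critical point of $\mathcal{J} + \mathcal{K}$ under the stated step-size bound.
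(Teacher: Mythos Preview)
Your proposal is correct and follows essentially the same approach as the paper: the argument is split into convergence of the inner sOT/Dykstra loop via Proposition~3.1 of \cite{peyre2015entropic} (checking nonemptiness of the triple relative-interior intersection) and convergence of the outer Mirror-C iteration via Lemma~6 of \cite{boct2016inertial} (using the $\tfrac{1}{N}$-strong convexity of $-H$ and the Lipschitz bound $L_{\nabla\mathcal{J}} = nm\,\max\{D^1_{ik},D^2_{jl}\}^2$). Your additional remark about the $\infty$-pattern/$0$-pattern bookkeeping is a fair technical point that the paper leaves implicit.
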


\subsection{Downsampling and Recovering Coupling Between Full Datasets From Downsampled Data}
To deal with large datasets, we first compute sGW on downsampled data and later recover the full coupling by solving an sOT problem. For downsampling, we consider two shape-preserving approaches, Mapper \cite{singh2007topological} and Geosketch \cite{hie2019geometric} which highlight topology conservation and geometry conservation, respectively. For a dataset often in a reduced-dimensional space, Mapper first constructs a cover of the space by overlapping hypercubes. While Mapper can handle more general spaces like a circle, we restrict ourselves to Euclidean spaces here. Then clustering is performed for data within each hypercube and the resulting clusters are connected based on their shared data points. Here, we take the collection of the centroid of each cluster as a downsampling of the original data. Geosketch divides the space into boxes which induces a cover of the dataset. It then randomly picks one data point in each box to form a downsampled dataset. 

Consider two datasets each with $n$ and $m$ points, and an optimal sGW transport plan computed for the corresponding downsampled datasets, $\mathbf{\hat{P}}\in\mathbb{R}^{\hat{n}\times \hat{m}}$, we aim to recover the coupling $\mathbf{P}\in\mathbb{R}^{n\times m}$ between the two full datasets. Let $\mathbf{\hat{D}}^1\in\mathbb{R}^{n\times\hat{n}}$ and $\mathbf{\hat{D}}^2\in\mathbb{R}^{m\times\hat{m}}$ be the geodesic distances between all data points and the downsampled data points of the two datasets. Each row describes the location of a data point with respect to the downsampled points and we use $\mathbf{\hat{P}}$ to bridge the two location descriptions. Specifically, we construct a cost matrix $\mathbf{C}^1_{ij}=d_p(\mathbf{\hat{D}}^1_{i\cdot}, (\mathbf{\hat{D}}^{2}\mathbf{\hat{P}'}_c)_{j\cdot})$ where $\mathbf{\hat{P}'}_c$ is the transpose of column-normalized $\mathbf{\hat{P}}$ (columns sum to $1$). Similarly, by mapping $\mathbf{\hat{D}}^1$ to the space of $\mathbf{\hat{D}}^2$, we construct $\mathbf{C}^2_{ij}=d_p((\mathbf{\hat{D}}^{1}\mathbf{\hat{P}}_r)_{i\cdot}, \mathbf{\hat{D}}^2_{j\cdot})$, where $\mathbf{\hat{P}}_r$ is the row-normalized $\mathbf{\hat{P}}$. Then, we obtain a full coupling between the two datasets by solving an sOT problem with $(\mathbf{C}^1+\mathbf{C}^2)$ as the cost.

\section{Numerical Experiments}\label{sec:numerical}

\begin{figure}[t]
\centerline{
\includegraphics[width=0.8\textwidth]{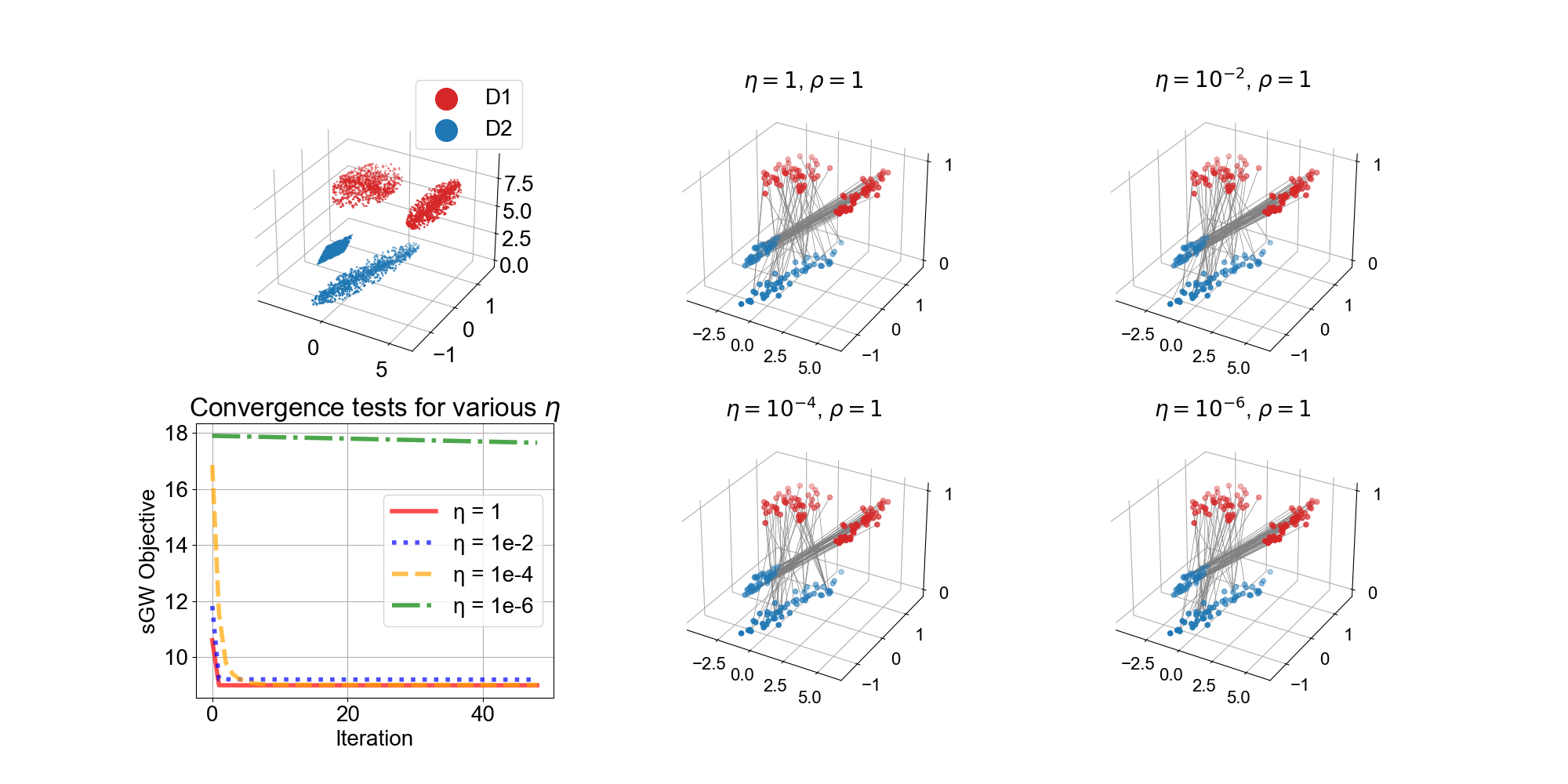}
}
\vspace{-0.25in}
\caption{The impact of the step size $\eta$ on sGW matching on synthetic point-cloud data.}
\label{fig:3dexample_1_eta}
\end{figure}

In this section, we first use the sGW solver on simple synthetic data to validate our proposed sGW model framework. Then we apply the model to some real applications in single-cell RNA sequencing (scRNA-seq) data. In all the numerical experiments, we take the marginal distributions $\mathbf{a}$ and $\mathbf{b}$ to be uniform as given in Eq. (\ref{eqn:uniform_marginal}).

\subsection{Synthetic Point-cloud Matching}

\begin{figure}[h]
\centerline{
\includegraphics[width=0.8\textwidth]{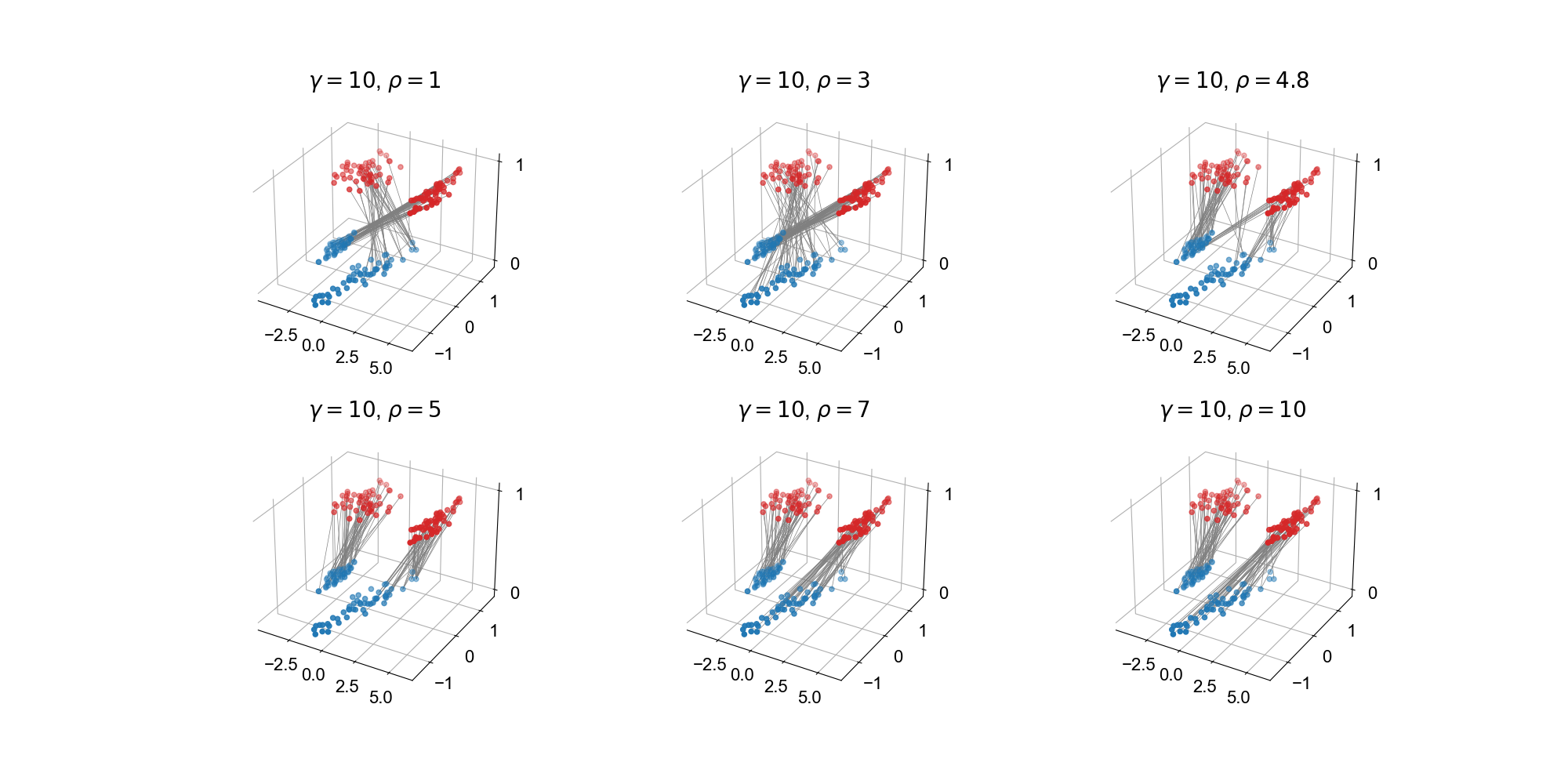}
}
\vspace{-0.25in}
\caption{The impact of the shreshold $\rho$ on thesGW matching on synthetic point-cloud data.}
\label{fig:3dexample_2_rho}
\end{figure}

In this section, we present a numerical result for the matching between two point clouds as in \cite{sejourne2021unbalanced}. Let $E_2, E_3$ be the uniform distributions on 2D and 3D ellipses, respectively, and $C$ and $S$ be uniform distributions over a 2D square and a 3D sphere, respectively. We take the two point clouds $\mathbf{a}= 0.5E_3 + 0.5S$ and $\mathbf{b}= 0.3E_2+0.7C$.

In the first numerical experiment presented in Fig. \ref{fig:3dexample_1_eta}, we fix the threshold $\rho = 1$, and study the impact of the step size $\eta = \frac{\epsilon \Delta t}{1 + \epsilon \Delta t}$ on the sGW matching. When $\eta = 1, 10^{-2}, 10^{-4}, 10^{-6}$, the sGW matching is similar to each other. Besides, the convergence test in the bottom left corner of Fig. \ref{fig:3dexample_1_eta} indicates that the larger $\eta$ is, the faster the convergence is. This phenomenon is consistent with the numerical finding in \cite{solomon2016entropic}. Therefore, from now on, though it is possibly beyond the convergence condition in Theorem \ref{theorem:sGW_conv}, we will still take $\eta = 1$ for computational efficiency.

The second numerical experiment in Fig. \ref{fig:3dexample_2_rho} illustrates the effect of the threshold $\rho$ on the sGW matching. Here we fix $\eta = 1$ and $\gamma = 10$. Note that when $\rho$ is small, there are few matching points in $\mathbf{a}$ and $\mathbf{b}$. As $\rho$ becomes greater, more and more points are matched. When $\rho = 3$, all the points are matched but in a crossed manner. When $\rho$ further grows, the optimal solution tends to become straightforward matching between point clouds. When $\rho = 10$, which exceeds its upper bound $\max_{ijkl}\sqrt{\mathcal{M}_{ijkl}}$, all the points are matched straightfowardly. The last result reduces to the PGW matching with $s = \min\{\|\mathbf{a}\|_1, \|\mathbf{b}\|_1  \}$. Since here we take $\|\mathbf{a}\|_1 = \|\mathbf{b}\|_1$, it is also identical to the original GW matching.

In the third numerical experiment in Fig. \ref{fig:3dexample_3_gamma}, we study the influence of the penalty $\gamma$ on the sGW matching. By taking a sequence of increasing $\gamma = 0, 0.01, 0.1, 1, 10, 100$, we note that the total transported mass $\sum_{ij} P_{ij}$ keeps increasing. Until $\gamma$ exceeds some certain threshold (say, $\gamma = 10$ in this example), the total transported mass becomes unchanged. This numerical experiment validates our definition of sGW in (\ref{eqn:sGW04}). Indeed, this example indicates that $\gamma$ does not have to be too large to make the sGW work. From now on, we will fix $\gamma = 10$.

\begin{figure}[h]
\centerline{
\includegraphics[width=0.8\textwidth]{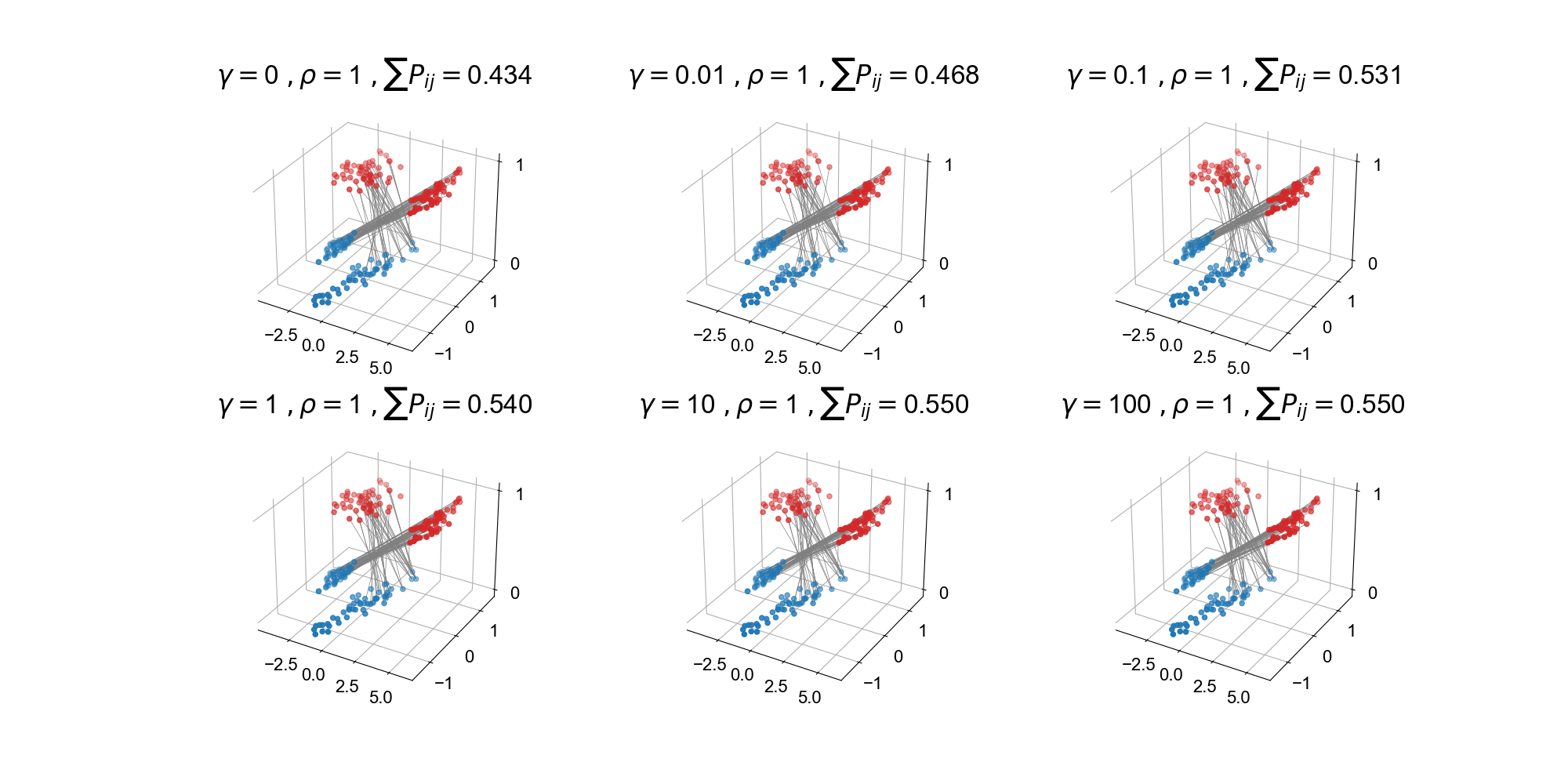}
}
\vspace{-0.25in}
\caption{The $\gamma$-effect on the sGW matching on synthetic point-cloud data.}
\label{fig:3dexample_3_gamma}
\end{figure}

\subsection{Calculation of MVC}

In this section, we conduct numerical tests to assess the matching between two smaller datasets and illustrate the complexity of calculating the MVC. We consider dataset 1, consisting of 5 points in 2D, and dataset 2, consisting of 9 points in 3D. The step size $\eta = 1$ and $\rho = 0.5$ are fixed.

In Fig. \ref{fig:cross_example}, in the top left subfigure, we generate two matchings as illustrated in the last two subfigures in the top row. The presence of multiple solutions arises from the mirror symmetry of the datasets. Similarly, for the datasets in the bottom row, we also observe two different solutions—one representing the most natural straightforward matching and the other arising from rotational symmetry. 
In the second column of Fig. \ref{fig:cross_example}, we present the intricate graph structure used for the MVC calculation. In this example, the vertex set $V$ consists of 45 points. By implementing the GVC algorithm as described in Algorithm \ref{alg:gvc}, we identify 40 vertices in the outcome, denoted by $V_{\min}$. Because the datasets are small in this example, we can manually calculate the MVC, allowing us to compare the sizes of the exact MVC and $V_{\min}$ produced by the GVC algorithm. In general, assessing the approximation accuracy of the GVC algorithm for such synthetic graphs can be challenging, particularly when dealing with distributions $\mathbf{a}$ and $\mathbf{b}$ over larger point clouds.


\begin{figure}[h]
\centerline{
\includegraphics[width=0.8\textwidth]{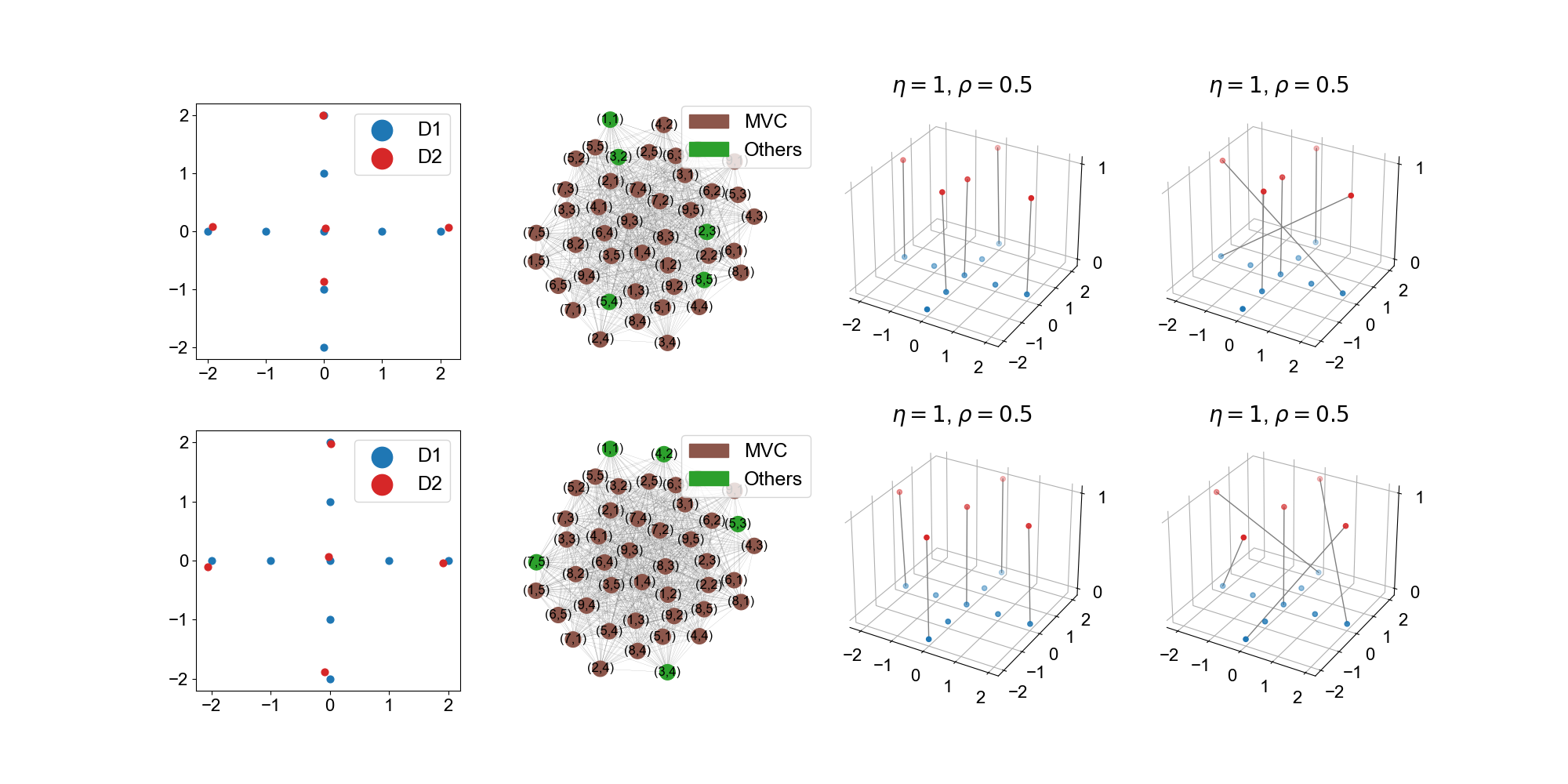}
}
\vspace{-0.25in}
\caption{sGW matching for smaller datasets and the intricate graph structure in the MVC calculation.}
\label{fig:cross_example}
\end{figure}

\subsection{Surface Matching}
To assess the performance of sGW in practical applications, we conducted a comparison of surface matching between two octopuses. Our objective is to explore the trade-off between strict geometric similarity and complete mass matching, denoted as $s = \sum \mathbf{P}_{ij}$. We employed Mapper for downsampling, resulting in octopus 1 having 144 mesh points and octopus 2 containing 127. Distance matrices were calculated using geodesic distances on the full dataset and the mesh points in octopus 1 are colored based on their position (8 legs and 1 body). In this case, $\rho_{\max} = 2.04$ and Fig. \ref{fig:oc1} displays the matchings via sGW for $\rho = 0.2, 1.5, 2.0$. At a lower threshold ($\rho = 0.2$), the matching is more rigid, adhering closely to geometric similarity. Here, the coupling matrix $\mathbf{P}$ typically has only one non-zero element per row and column. As $\rho$ increases, the coverage expands and $s$ grows. However, this leads to more fragmented matching ($\rho =2.0$), and we assign to vertex index $k$ in octopus 2 the same color as vertex index $i$ in octopus 1, where $k= \operatorname{argmax}_j \mathbf{P}_{ij}$.

We further evaluated the matching between a filtered version of octopus 1 and the original octopus 2, as shown in Fig. \ref{fig:oc2}. This filtered octopus 1 was created by removing eight half legs in total. The new downsampling of the filtered octopus 1, using Mapper, results in 77 points, while octopus 2 remains at 127 points. The geodesic distance matrices for the modified octopus 1 and octopus 2 range from (0, 1.20) and (0, 2.04), respectively. Similar to the observation in Fig. \ref{fig:3dexample_2_rho}, the matching region of octopus 2 in Fig. \ref{fig:oc2} is only partial, covering half of its surface, and it never encompasses the entire octopus as long as $\rho \leq 1.20$. As $\rho$ increases, the preferred solution gradually shifts towards a regular matching that covers the entire octopus 2. Different from sGW, the patterns in GW, PGW, and UGW always spread across the whole octopus 2. This unique aspect of sGW, particularly noticeable when $\rho$ is relatively small, highlights its ability to respect spatial information while providing maximum geometric similarity.

\begin{figure}[h]
\centerline{
\includegraphics[width=0.6\textwidth]{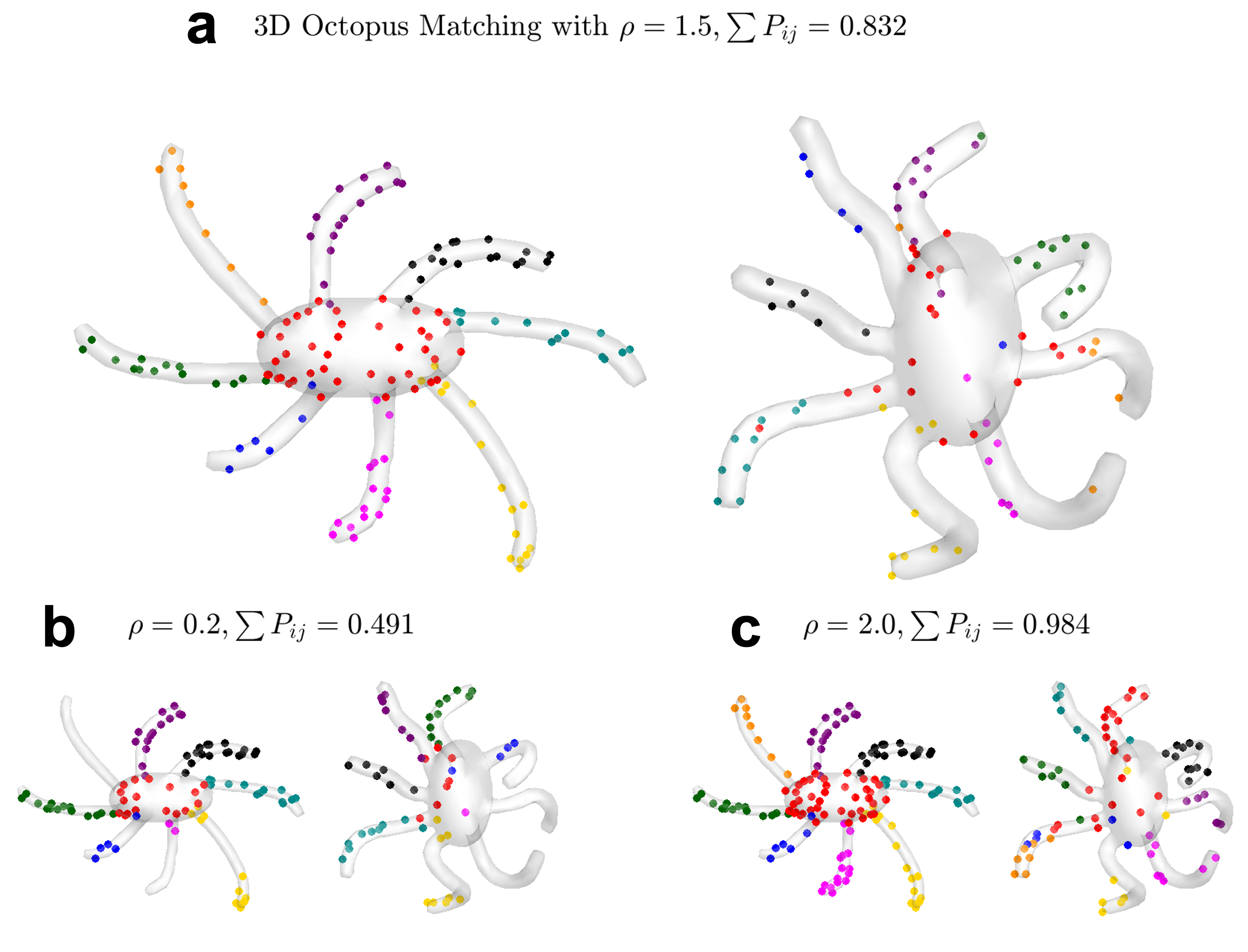}
}
\vspace{-0.25in}
\caption{sGW matching between full surfaces of two octopuses.}
\label{fig:oc1}
\end{figure}

\begin{figure}[h]
\centerline{
\includegraphics[width=0.6\textwidth]{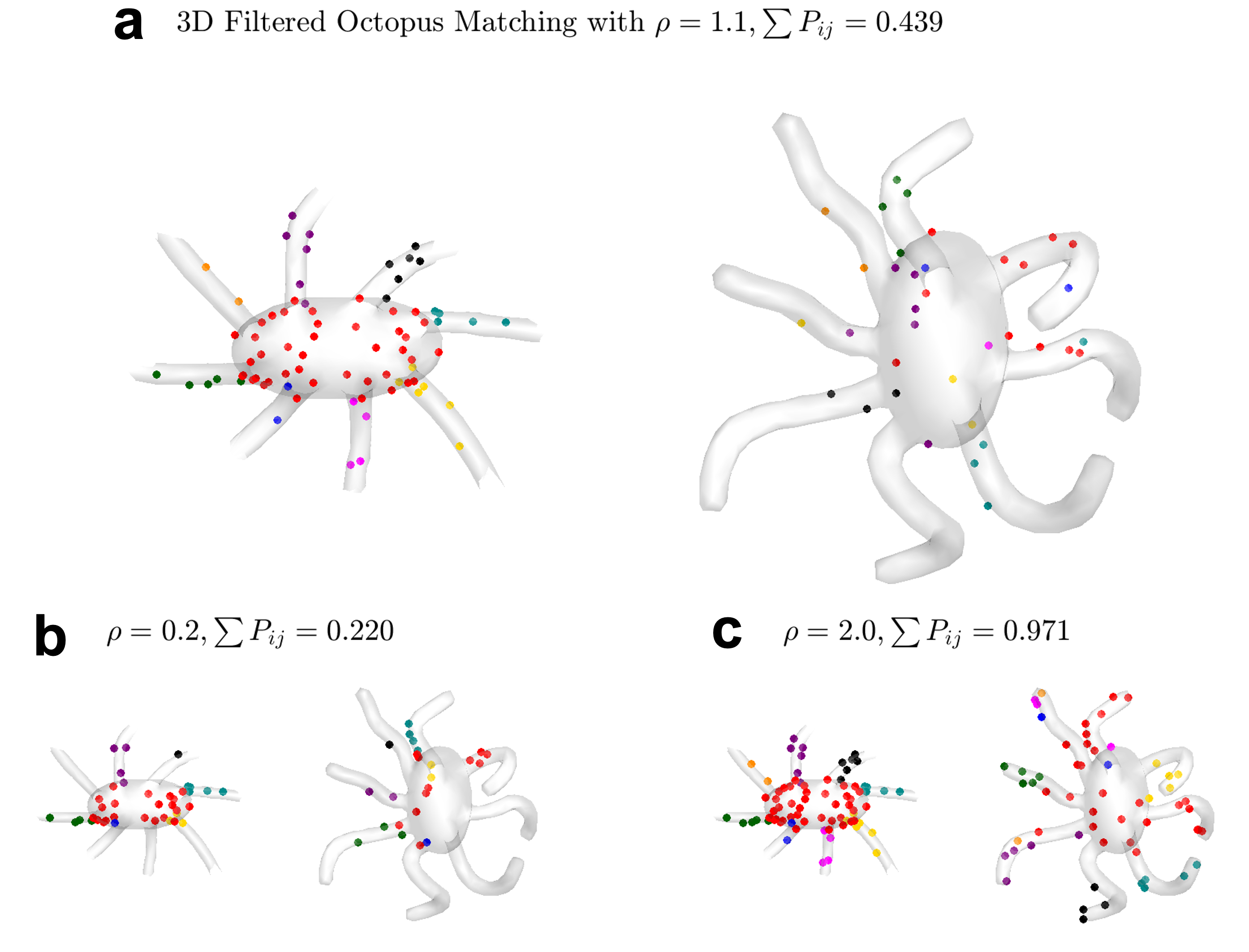}
}
\vspace{-0.25in}
\caption{sGW matching between the filtered surface of octopus 1 and the full surface of octopus 2.}
\label{fig:oc2}
\end{figure}

\subsection{Single-cell RNA Sequencing Data Matching}

In this section, we evaluate sGW on single-cell RNA sequencing (scRNA-seq) \cite{svensson2018exponential} data resulting from a revolutionary technology that examines gene expression profiles of tissues with unprecedented single-cell resolution. We first test on a simulated \cite{saelens2019comparison} and a real scRNA-seq dataset \cite{han2018mapping}. Principal component analysis is first performed on the original high-dimensional data. A $k$-nearest-neighbor graph is then constructed based on the Euclidean distance in the PCA embedding. The shortest path lengths along the graph with respect to the Euclidean distance in PCA embedding are used as the cost matrices for sGW. A TSNE dimension reduction on the PCA embedding is performed for visualizing the data. Correct couplings are obtained using sGW on several cases including one dataset being a subset of the other, partially overlapping datasets with large or small overlap (Figs.\ref{fig:bioexample_1_toy} and \ref{fig:bioexample_2_real}).

\begin{figure}[h]
\centerline{
\includegraphics[width=0.7\textwidth]{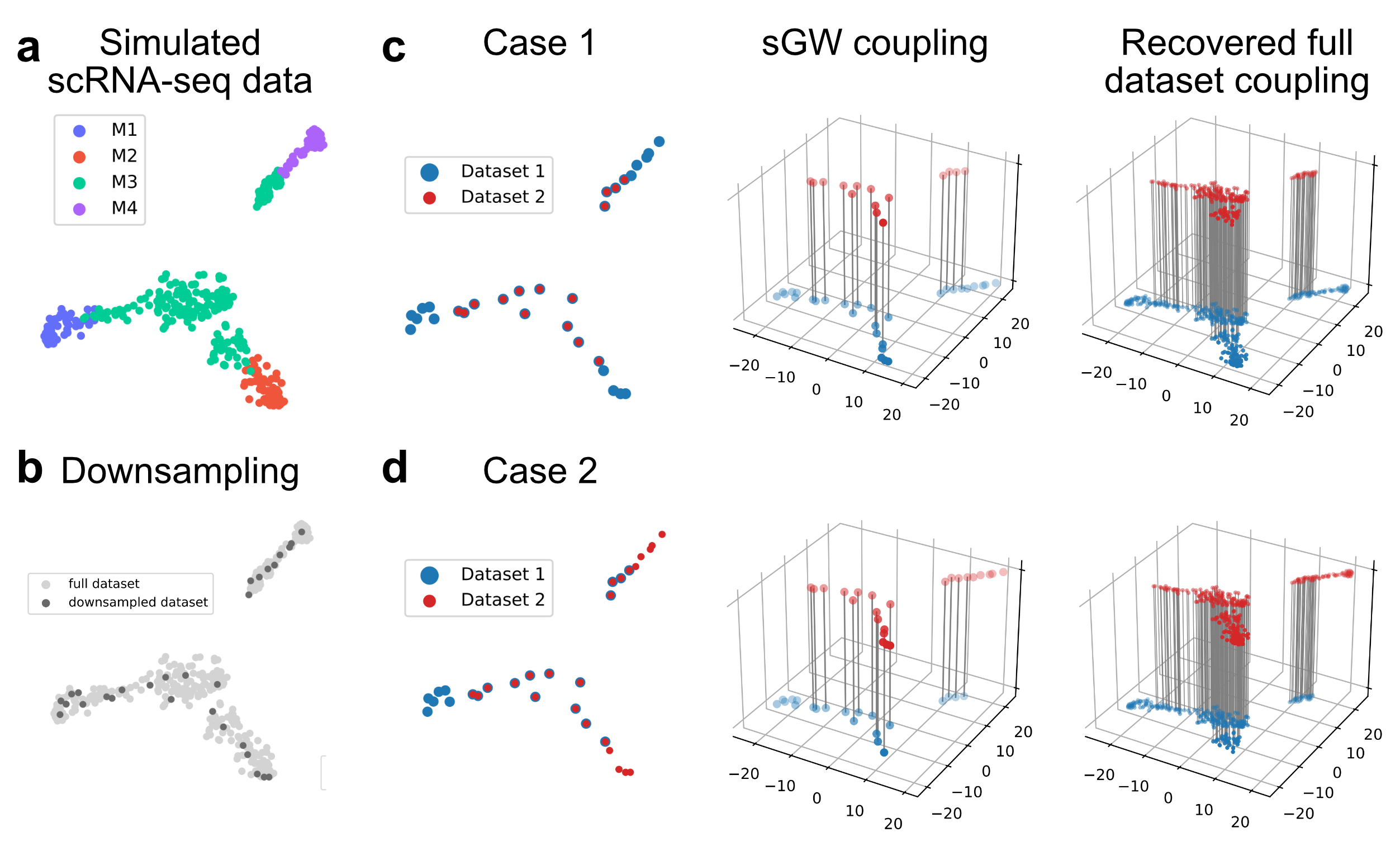}
}
\caption{sGW matching on a simulated scRNA-seq data. ($\mathbf{a}$) The TSNE visualization of the simulated scRNA-seq data colored by cell type; ($\mathbf{b}$) Downsampling of the full data; ($\mathbf{c}$) Case 1: dataset 2 is a subset of dataset 1, the sGW coupling between the downsampled data and the recovered full dataset coupling by sOT; ($\mathbf{d}$) Case 2: two partially overlapping datasets.}
\label{fig:bioexample_1_toy}
\end{figure}

\begin{figure}[h]
\includegraphics[width=0.7\textwidth]{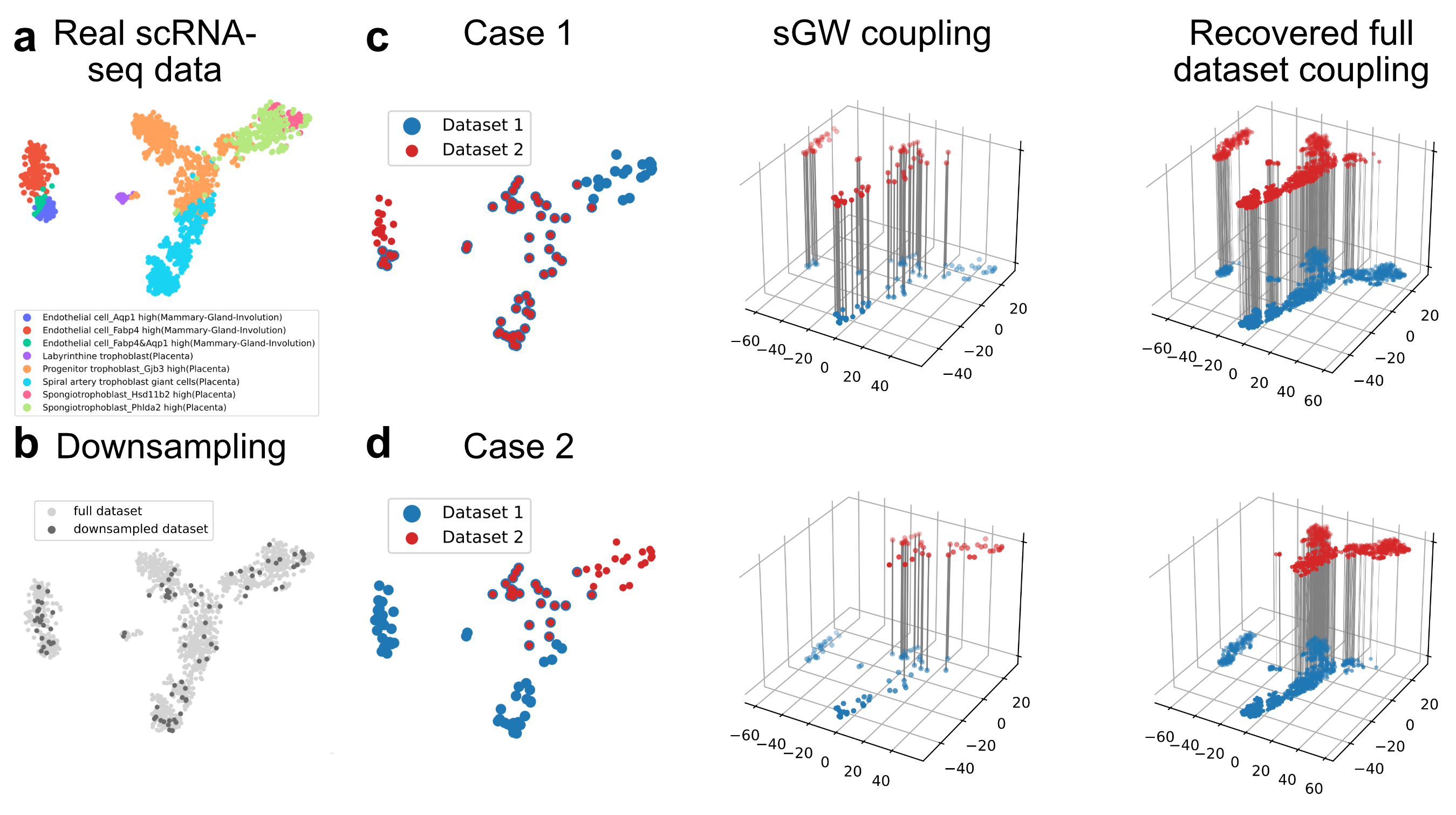}
\centering
\caption{sGW matching on a real scRNA-seq data. (\textbf{a}) The TSNE visualization of the scRNA-seq data colored by cell type; (\textbf{b}) Downsampling of the full data; (\textbf{c}) Case 1: two partially overlapping datasets with large overlap, the sGW coupling between the downsampled data and the recovered full dataset coupling by sOT; (\textbf{d}) Case 2: two partially overlapping datasets with small overlap.}
\label{fig:bioexample_2_real}
\end{figure}

\begin{figure}[h]
\includegraphics[width=0.6\textwidth]{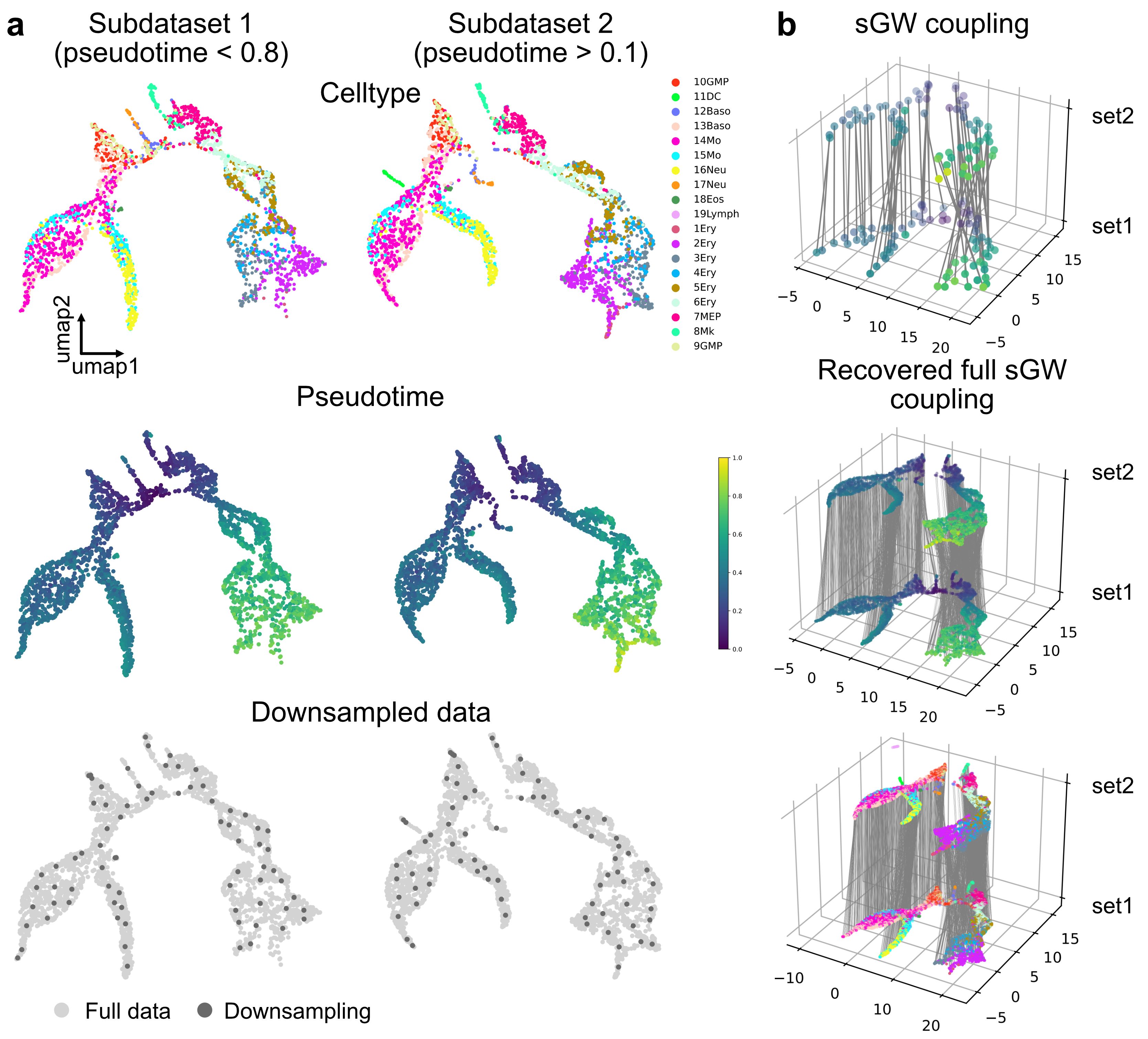}
\centering
\caption{sGW matching of different embeddings of a real scRNA-seq data. (\textbf{a}) Two UMAP embeddings of two subsets of the scRNA-seq data with colors showing cell types, pseudotime, and downsampling; (\textbf{b}) Independent 2-dimensional UMAP and 3-dimensional UMAP embeddings of the same datasets as in (a). A correct coupling should match the pseudotime or cell types represented by the point colors.}
\label{fig:bioexample_3_real}
\end{figure}

\begin{figure}[h]
\includegraphics[width=0.7\textwidth]{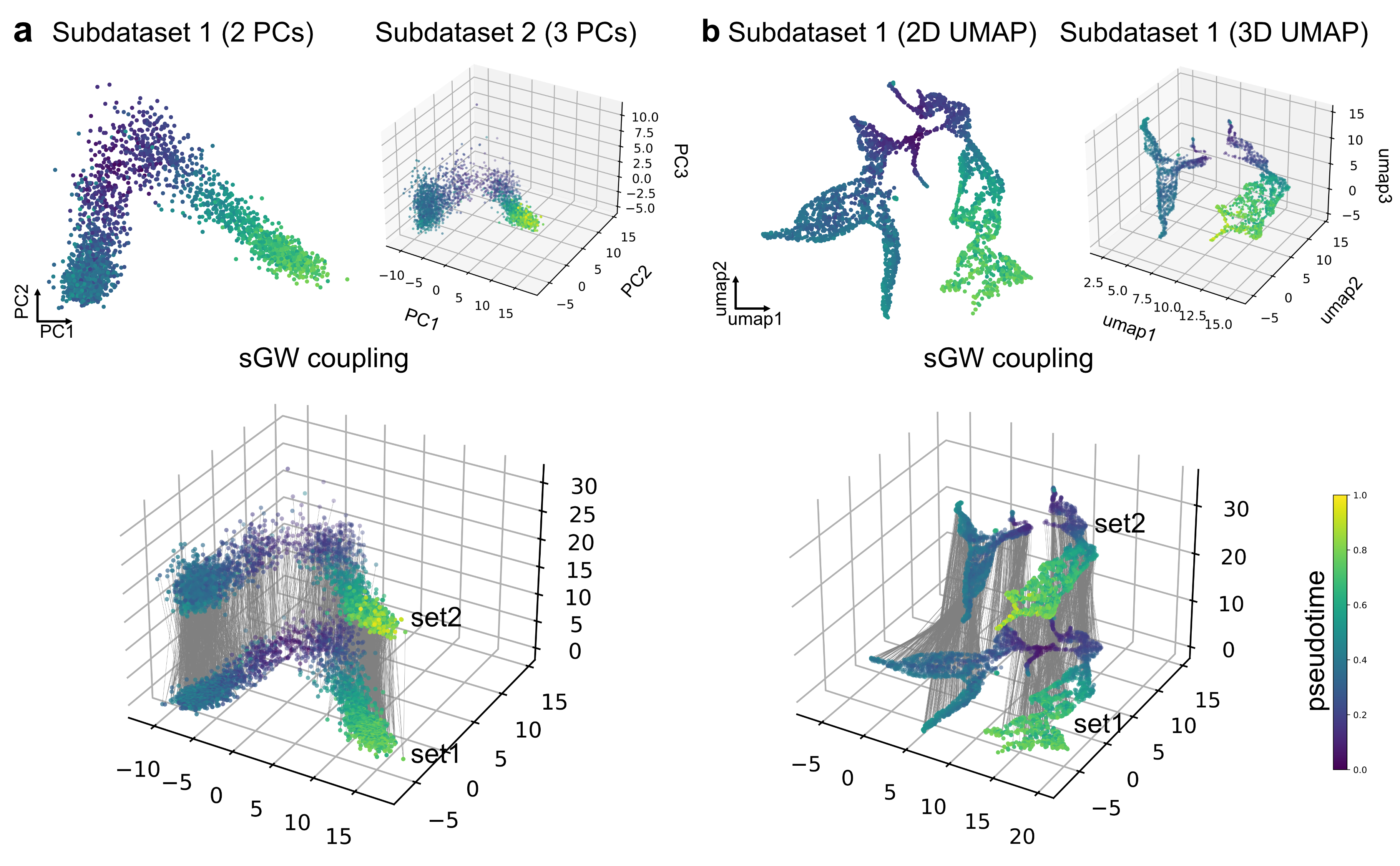}
\centering
\caption{sGW matching of different dimensional embeddings of a real scRNA-seq data. (\textbf{a}) A 2-dimensional and a 3-dimensional embedding of the dataset by taking the first two or three principal components, and the resulting sGW coupling; (\textbf{b}) The sGW coupling of the downsampled data and the recovered full dataset coupling by sOT. A correct coupling should match the pseudotime of cells represented by colors.}
\label{fig:bioexample_3.5_real}
\end{figure}

\begin{figure}[h]
\includegraphics[width=0.95\textwidth]{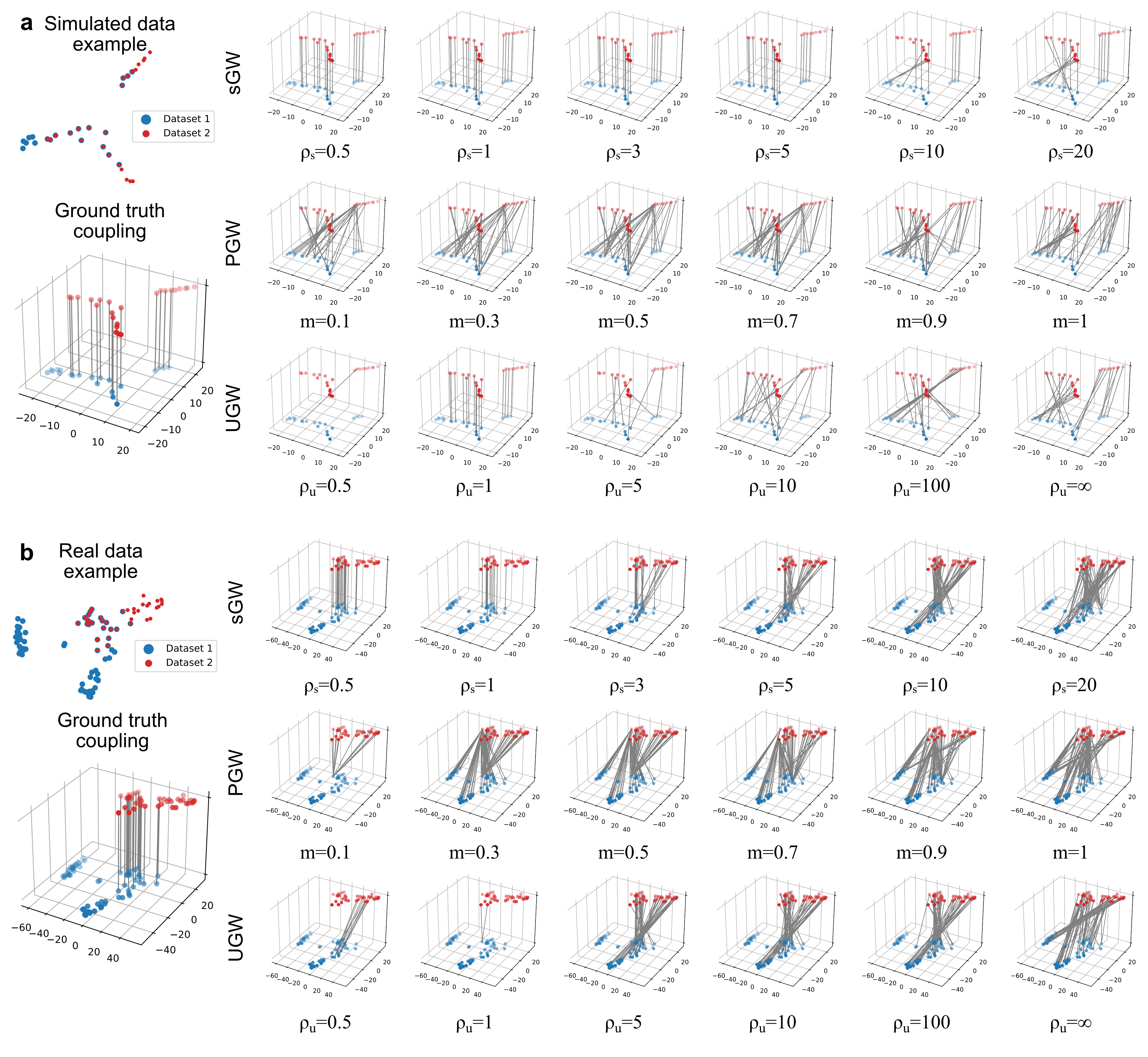}
\centering
\caption{Comparison of sGW to partial GW (PGW) and unbalanced GW (UGW) on two scRNA-seq datasets. (\textbf{a}) For the simulated scRNA-seq data, the results from sGW, PGW, and UGW using different values for the key parameter of each method, the cutoff value $\rho_s$ for sGW, the total transported mass $m$ for PGW, and the KL divergence coefficient $\rho_u$ for UGW; (\textbf{b}) The GW results on the real scRNA-seq data.}
\label{fig:bioexample_4_comparison}
\end{figure}

We also test the performance of sGW on different embeddings of a dataset. Here, we use the scRNA-seq data of myeloid progenitors \cite{paul2015transcriptional}. Here, we perform UMAP dimension reduction on the PCA space of the dataset with different random initializations resulting in different embeddings of the two subsets (Fig.\ref{fig:bioexample_3_real}a). The shortest path distances along the $k$-nearest-neighbor graph induced by the distance in the UMAP embeddings are used as cost matrices. The results show that sGW also derives generally correct coupling of data on different embeddings including independently derived same dimensional UMAP embeddings (Fig.\ref{fig:bioexample_3_real}b) and embeddings of different dimensions (Fig.\ref{fig:bioexample_3.5_real}).

Using the downsampled datasets in the previous examples, we demonstrate the differences among sGW, PGW, and UGW. We conduct experiments with different values of the key parameters of sGW, PGW, and UGW. In sGW, we tested with different values of the cost cutoff threshold $\rho = \rho_s$. In PGW, different values of total transported mass $m$ are tested. In UGW, different values are used for the coefficient $\rho_u$ of the KL divergence terms $\text{KL}(\mathbf{P1}|\mathbf{a})$ and $\text{KL}(\mathbf{P}^\mathrm{T}\mathbf{1}|\mathbf{b})$. In the experiments, uniform distributions of data are used and all distance matrices are normalized by dividing by the maximum value of $\mathbf{D}^1$. Entropy regularization is used for all formulations with the coefficient $\epsilon=0.1$ for sGW and PGW and $\epsilon=1$ for UGW. With a small enough cutoff $\rho_s$, sGW is able to reconstruct the correct coupling between the two datasets. As $\rho_s$ increases, more connections are created and the symmetry underlying the data would cause reversed coupling for some parts of the data. In these experiments, PGW and UGW preserve the local geometry and global topology well. Since there are no hard constraints specifying upper bounds of distance changes after coupling, they are less sensitive to large-scale geometric configurations causing large-scale shifts compared to the ground truth coupling (Fig.\ref{fig:bioexample_4_comparison}).

\FloatBarrier
\section{Conclusions and Discussions}\label{sec:conclusion}
In this paper, we propose an sGW model framework to handle the matching problem between two datasets $\mathcal{X} = \{\mathbf{x}_i\}_i$ and $\mathcal{Y} = \{\mathbf{y}_j\}_j$ in distinct underlying metric spaces. In this model, we avoid the correspondence between $(\mathbf{x}_i, \mathbf{x}_j)$ and $(\mathbf{y}_k, \mathbf{y}_l)$ when the difference between $\|\mathbf{x}_i - \mathbf{x}_j\|$ and $\|\mathbf{y}_k - \mathbf{y}_l\|$ exceeds certain threshold. This is achieved by introducing an $\infty$-pattern in the tensor cost $\mathcal{M}$, and then transporting the most possible mass in $\mathbf{P}$ with minimal cost.

To apply sGW to large-scale practical problems, we design an sGW solver based on a heuristic minimum vertex cover algorithm, the Mirror-C descent algorithm, and the sOT algorithm. We also prove that the sequence $\{\mathbf{P}^{(k)}\}_{k\ge 0}$ generated by the sGW solver converges to a stationary point of the non-convex sGW objective provided that the step size $\eta$ (or $\Delta t$) is sufficiently small in the Mirror-C descent algorithm. Numerical experiments indicate that a relatively large step size accelerates the convergence and produces similar optimal coupling when the threshold $\rho$ in the tensor cost is small. Finally, by comparing sGW with other GW variants on synthetic data and practical single-cell RNA sequencing data, we highlight its distinctive advantage in overseeing the optimal matching.

The mathematical and numerical results from our study open up several future directions. Firstly, one may examine the relationship between sGW and PGW using the dummy point modification on the tensor cost instead of two distance matrices, particularly when the tensor cost possesses an $\infty$-pattern. Secondly, when the distributions in the two metric spaces are nonuniform, one needs to find weighted MVCs in order to describe the $0$-pattern condition in $\mathbf{P}$, making the characterization of the weighted graph critical. Besides, since it is computationally intractable to find MVC for a large graph within the sGW framework, we need to come up with an algorithm that can generate an approximate MVC yet is still efficient even for a large graph. Lastly, we may consider some accelerated iterative methods for the non-convex minimization, such as the Nesterov accelerated proximal gradient method.


\section*{Acknowledgements}

Z. Cang's work is supported by an NSF grant DMS-2151934. Y. Zhao's work is supported by a grant from the Simons Foundation through Grant No. 357963 and NSF grant DMS-2142500.

\appendix
\noindent
\section*{Appendix}
\section{Algorithm by Mirror Descent} 
In this appendix, we explain that the Mirror descent method can also generate a similar iteration as Eq. (\ref{eqn:MirrorC_01}) derived from Mirror-C descent. Additionally, the KL Mirror descent method is identical to the combined KL gradient and KL projection proposed in \cite{peyre2016gromov}.

Indeed, using the same notation as in (\ref{eqn:noncovex_objective}),
\begin{align}
\min_{\mathbf{P}\in \mathbb{R}^{n\times m}_+  } \mathcal{J}(\mathbf{P})+\mathcal{K}(\mathbf{P})
\end{align}
where
\begin{align*}
\mathcal{J}(\mathbf{P}) &= \frac{1}{2}\langle \mathcal{M}, \mathbf{P} \otimes \mathbf{P}\rangle_F,\\
\mathcal{K}(\mathbf{P}) &= -\epsilon H(\mathbf{P}) + \iota_{\mathbf{U}(\leq \mathbf{a}, \leq \mathbf{b}) \cap \mathcal{T}_\mathcal{M}^*} (\mathbf{P}) + \gamma \left( \| \mathbf{a}-\mathbf{P} \textbf{1}  \|_{1} + \| \mathbf{b}-\mathbf{P}^\mathrm{T} \textbf{1} \|_{1} \right) \\
& = \tilde{\mathcal{K}}(\mathbf{P}) + \iota_{\mathbf{U}(\leq \mathbf{a}, \leq \mathbf{b}) \cap \mathcal{T}_\mathcal{M}^*}(\mathbf{P}),
\end{align*}
and applying the KL Mirror descent method directly onto $\mathcal{J}$ and $\mathcal{K}$, we obtain that
    \begin{align*}
        \mathbf{P}^{(k+1)} & = \underset{\mathbf{P}\in \mathbf{U}(\leq \mathbf{a}, \leq \mathbf{b}) \cap \mathcal{T}_\mathcal{M}^*}{\mathrm{argmin}}\left\{ \left\langle \nabla \mathcal{J}(\mathbf{P}^{(k)}) + \nabla \tilde{\mathcal{K}}(\mathbf{P}^{(k)}), \mathbf{P} \right\rangle + \frac{1}{\Delta t'} \text{KL}(\mathbf{P}|\mathbf{P}^{(k)}) \right\}.
    \end{align*}
Note that $\langle \nabla \tilde{\mathcal{K}}(\mathbf{P}^{(k)}), \mathbf{P} \rangle = \langle \epsilon \log \mathbf{P}^{(k)} + 2\gamma \mathbf{1}, \mathbf{P} \rangle$, it follows that
    \begin{align*}
        \mathbf{P}^{(k+1)} & = \underset{\mathbf{P}\in \mathbf{U}(\leq \mathbf{a}, \leq \mathbf{b}) \cap \mathcal{T}_\mathcal{M}^*}{\mathrm{argmin}}\Big\{ \left\langle \nabla \mathcal{J}(\mathbf{P}^{(k)}), \mathbf{P} \right\rangle
        + \epsilon \langle \log \mathbf{P}^{(k)}, \mathbf{P} \rangle \\
        & \hspace{1.2in} + \gamma \left( \| \mathbf{a}-\mathbf{P} \textbf{1}  \|_{1} + \| \mathbf{b}-\mathbf{P}^\mathrm{T} \textbf{1} \|_{1} \right)
        + \frac{1}{\Delta t'} \text{KL}(\mathbf{P}|\mathbf{P}^{(k)}) \Big\}.
    \end{align*}
Now since $\nabla \mathcal{J}(\mathbf{P}) = \mathcal{M}\circ \mathbf{P}$, we further have that
    \begin{align*}
        \mathbf{P}^{(k+1)} & = \underset{\mathbf{P}\in \mathbf{U}(\leq \mathbf{a}, \leq \mathbf{b}) \cap \mathcal{T}_\mathcal{M}^*}{\mathrm{argmin}}
        \Big\{ 
        \Delta t' 
        \left\langle 
        \mathcal{M}\circ \mathbf{P}^{(k)}, \mathbf{P} 
        \right\rangle 
        + \epsilon \Delta t' \langle \log \mathbf{P}^{(k)}, \mathbf{P} \rangle \\
        & \hspace{1.2in} +  \gamma \Delta t' \left( \| \mathbf{a}-\mathbf{P} \textbf{1}  \|_{1} + \| \mathbf{b}-\mathbf{P}^\mathrm{T} \textbf{1} \|_{1} \right)  + \big(-H(\mathbf{P}) - \langle \mathbf{P}, \log \mathbf{P}^{(k)} \rangle \big) 
        \Big\}.    
    \end{align*}
    Combining the two terms of $\langle \log \mathbf{P}^{(k)}, \mathbf{P} \rangle$ and noting that $-H(\mathbf{P}) - \langle \mathbf{P}, \mathbf{Q} \rangle = \text{KL}(\mathbf{P}|\exp(\mathbf{Q}))$ for a generic $\mathbf{Q}$, it yields the KL Mirror iteration:    
\begin{align}
    \mathbf{P}^{(k+1)} &= \underset{\mathbf{P}\in \mathcal{T}_\mathcal{M}^*}{\mathrm{argmin}} \  \text{KL}(\mathbf{P}|\mathbf{W}^{(k)}) + h_1(\mathbf{P1}) + h_2(\mathbf{P}^{\top}\mathbf{1}) \label{eqn:Mirror_01} \\
    &= \underset{\mathbf{P}\in \mathcal{T}_\mathcal{M}^*}{\mathrm{argmin}} \  \text{KL} \left(\mathbf{P} \Big| \left[e^{-\mathcal{M}\circ\mathbf{P}^{(k)}/\epsilon}\right]^{\epsilon\Delta t'} \odot \left[\mathbf{P}^{(n)}\right]^{(1-\epsilon\Delta t')} \right) + h_1(\mathbf{P1}) + h_2(\mathbf{P}^{\top}\mathbf{1}). \label{eqn:Mirror_02}
\end{align}
where
\begin{align*}
    &\mathbf{W}^{(k)} = \exp \left( - \Delta t' \mathcal{M}\circ \mathbf{P}^{(k)} + (1-\epsilon\Delta t') \log \mathbf{P}^{(k)} \right), \\
    &h_1(\mathbf{P1}) = \Delta t' \gamma \|\mathbf{a}-\mathbf{P1}\|_1 + \iota_{[0,\mathbf{a}]}(\mathbf{P1}),\\
    &h_2(\mathbf{P}^{\top}\mathbf{1}) = \Delta t' \gamma \|\mathbf{b}-\mathbf{P}^{\top}\mathbf{1}\|_1 + \iota_{[0,\mathbf{b}]}(\mathbf{P}^{\top}\mathbf{1}).
\end{align*}

It is evident that when taking $\epsilon\Delta t'$ in KL Mirror descent (\ref{eqn:Mirror_02}) equal to $\frac{\epsilon\Delta t}{1+\epsilon\Delta t}$ in the KL Mirror-C descent (\ref{eqn:MirrorC_02}), namely, $\frac{1}{\Delta t'} = \frac{1}{\Delta t} + \epsilon$, KL Mirror descent and KL Mirror-C descent are equivalent. Additionally, when taking $\epsilon \Delta t' = 1$, the Mirror descent becomes identical to KL the projected gradient descent proposed in Proposition 2 of \cite{peyre2016gromov}.


\end{document}